 \theoremstyle{plain}
\newtheorem{theorem}{Theorem}
\newtheorem{corollary}{Corollary}
\newtheorem{lemma}{Lemma}
\newtheorem{proposition}{Proposition}
\newtheorem{example}{Example}
\theoremstyle{definition}
\newtheorem{definition}{Definition}
\theoremstyle{remark}
\numberwithin{equation}{section}
\newcommand{\bT}{\begin{theorem}}
\newcommand{\eT}{\end{theorem}}
\newcommand{\bProp}{\begin{proposition}}
\newcommand{\eProp}{\end{proposition}}
\newcommand{\bE}{\begin{example}}
\newcommand{\eE}{\end{example}}
\newcommand{\bL}{\begin{lemma}}
\newcommand{\eL}{\end{lemma}}
\newcommand{\bP}{\begin{proof}}
\newcommand{\eP}{\end{proof}}
\newcommand{\bC}{\begin{corollary}}
\newcommand{\eC}{\end{corollary}}
\newcommand{\bD}{\begin{definition}}
\newcommand{\eD}{\end{definition}}
\newcommand{\be}{\begin{enumerate}}
\newcommand{\ee}{\end{enumerate}}
\newcommand{\beqa}{\begin{eqnarray*}}
\newcommand{\eeqa}{\end{eqnarray*}}
\newcommand{\beqaa}{\begin{eqnarray}}
\newcommand{\eeqaa}{\end{eqnarray}}
\newcommand{\ba}{\begin{array}}
\newcommand{\ea}{\end{array}}
\newdimen\plusheight
\def\+{\;\lower\plusheight\hbox{$+$}\;}
\newdimen\minusheight
\def\-{\;\lower\minusheight\hbox{$-$}\;}
\newdimen\cdotsheight
\def\cds{\lower\cdotsheight\hbox{$\cdots$}}
\begin{document}




\centerline {\Large{\bf Some Applications of a Bailey-type
Transformation}}

\centerline{}


\centerline{}

\centerline{\bf {James Mc Laughlin}}

\centerline{}

\centerline{Mathematics Department}

\centerline{25 University Ave.,}

\centerline{West Chester University}

\centerline{West Chester, PA 19383, USA}

\centerline{}

\centerline{\bf {Peter Zimmer}}

\centerline{}

\centerline{Mathematics Department}

\centerline{25 University Ave.,}

\centerline{West Chester University}

\centerline{West Chester, PA 19383, USA}

\newtheorem{Theorem}{\quad Theorem}[section]

\newtheorem{Definition}[Theorem]{\quad Definition}

\newtheorem{Corollary}[Theorem]{\quad Corollary}

\newtheorem{Lemma}[Theorem]{\quad Lemma}

\newtheorem{Example}[Theorem]{\quad Example}

\date{\today}

\begin{abstract}
If  $k$ is set equal to $a q$ in the definition of a WP Bailey pair,
\[
\beta_{n}(a,k) = \sum_{j=0}^{n}
\frac{(k/a)_{n-j}(k)_{n+j}}{(q)_{n-j}(aq)_{n+j}}\alpha_{j}(a,k),
\]
this equation  reduces to $\beta_{n}=\sum_{j=0}^{n}\alpha_{j}$.

This seemingly trivial relation connecting the $\alpha_n$'s with the
$\beta_n$'s has some interesting consequences, including several
basic hypergeometric summation formulae, a connection to the
Prouhet-Tarry-Escott problem, some new identities of the
Rogers-Ramanujan-Slater type, some new expressions for false theta
series as basic hypergeometric series, and new transformation
formulae for poly-basic hypergeometric series.

\end{abstract}

{\bf Mathematics Subject Classification:} 33D15

{\bf Keywords:} Q-Series, Rogers-Ramanujan Type Identities, Bailey
chains, Prouhet-Tarry-Escott Problem, multi-basic identities,
 false theta series


\section{Introduction}

In the present paper we show that the simple relationship
$\beta_{n}=\sum_{j=0}^{n}\alpha_{j}$ connecting two sequences $\{\alpha_n\}$ and $\{\beta_n\}$ has some interesting and surprising consequences. These include several new
basic hypergeometric transformation- and summation formulae, a connection to the
Prouhet-Tarry-Escott problem, some new identities of the
Rogers-Ramanujan-Slater type, some new expressions for false theta
series as basic hypergeometric series, and new transformation
formulae for poly-basic hypergeometric series. We give some examples now to illustrate these (details may be found later in the paper). The following identity is one of several new summation formulae.
\begin{equation*}
\sum_{n=0}^{\infty}
\frac{(1+q^{n+1}/x)(q/x^2;q)_{n}x^{n}(1-u^{n+1})}{(q;q)_{n+1}} =
\frac{1-u} {1-x}\frac{(qu/x;q)_{\infty}}{(xu;q)_{\infty}}.
\end{equation*}

We also found the following new transformation for poly-basic series.
\begin{multline*}
\sum_{n=0}^{\infty}
\frac{(q\sqrt{xyz},-q\sqrt{xyz}, y,
z;q)_{n}}{(\sqrt{xyz},-\sqrt{xyz},qxy,qxz;q)_n} \frac{ (ap^2;p^2)_n
\left(b P^2;P^2 \right)_n  } {
\left(\frac{PQR}{p};\frac{PQR}{p}\right)_n \left(\frac{a p P Q}{c
R};\frac{p P Q}{R}\right)_n}\\
\times
 \frac{ (cR^2;R^2)_n
\left(\frac{a Q^2}{b c};Q^2 \right)_n  } { \left(\frac{a
pQR}{bP};\frac{pQR}{P}\right)_n \left(\frac{b c p P R}{Q};\frac{ p P
R}{Q}\right)_n}\,x^{n}
= \frac{\left(1-xy\right)\left(1-xz\right)}
{(1-x)\left(1-xyz\right)} \times \\
\sum_{n=0}^{\infty} \frac{( y, z;q)_{n}}{(xy,xz;q)_n} \frac{
\left(1-a p^n P^n Q^n R^n\right)\left(1-b \frac{p^n P^n}{ Q^{n}
R^{n}}\right) \left(1-\frac{ P^n Q^n
  }{c p^{n} R^{n}}\right) \left(1-\frac{a p^n
   Q^n }{b cP^{n}R^{n}}\right)}{(1-a) (1-b)
   \left(1-\frac{1}{c}\right) \left(1-\frac{a}{b c}\right)}
\\
\times
 \frac{ (a;p^2)_n
\left(b ;P^2 \right)_n  } {
\left(\frac{PQR}{p};\frac{PQR}{p}\right)_n \left(\frac{a p P Q}{c
R};\frac{p P Q}{R}\right)_n}
 \frac{ (c;R^2)_n
\left(\frac{a }{b c};Q^2 \right)_n  } { \left(\frac{a
pQR}{bP};\frac{pQR}{P}\right)_n \left(\frac{b c p P R}{Q};\frac{ p P
R}{Q}\right)_n}\,(x R^2)^{n}.
\end{multline*}

If
\begin{equation*}
\{a_1, . . . , a_{12}\}\stackrel{11}{=}\{b_1, . . . , b_{11},1\}
\end{equation*}
is a solution to the \emph{Prouhet-Tarry-Escott problem} (see later for explanations and an explicit set of values for the $a_i$'s and $b_i$'s), then
\begin{multline*}
\,_{12} \phi _{11} \left [
\begin{matrix}
  a_1,\, a_2,\, a_{3},\, a_4,\,a_5,\,a_6,\,a_7,\,a_8,a_9,a_{10},a_{11},a_{12} \\
b_1q,\, b_2 q,\, b_{3}q,\,b_4 q,\,b_5 q,\,b_6 q,b_7
q,b_9q,b_{10}q,b_{11}q
\end{matrix}
; q,\, q^{12} \right ].\\
=\frac{(a_1q,a_2q,a_{3}q,a_4q,a_5q,a_6q,a_7q,a_8q,a_9q,a_{10}q,a_{11}q,a_{12}q;q)_{\infty}}
{(b_1q,\, b_2 q,\, b_{3}q,\,b_4 q,\,b_5 q,\,b_6 q,b_7
q,b_9q,b_{10}q,b_{11}q,q;q)_{\infty}}.
\end{multline*}
New identities of Rogers-Ramanujan-Slater type and new representations for false theta series include the following identities.
\begin{equation*}
1+\sum_{n=1}^{\infty} \frac{(-q;q)_n q^{(n^2-n)/2}}{(q;q)_{n-1}} =
\frac{(-1;q)_{\infty}(-q^6,-q^{10},q^{16};q^{16})_{\infty}}{(q^4;q^4)_{\infty}}.
\end{equation*}
\begin{equation*}
\frac{1}{2}+\sum_{n=0}^{\infty}\frac{(-1)^nq^{n(n+1)/2}}{(-1;q)_{n+2}}=\sum_{n=0}^{\infty}
q^{n(3n+1)/2}(1-q^{2n+1}).
\end{equation*}

\subsection{Background}
We begin by recalling  Andrews'  construction \cite{A01} of a
\emph{WP-Bailey chain}. If a pair of sequences
$(\alpha_{n}(a,k),\,\beta_{n}(a,k))$ satisfy
{\allowdisplaybreaks
\begin{equation}\label{WPpair}
\beta_{n}(a,k) = \sum_{j=0}^{n}
\frac{(k/a)_{n-j}(k)_{n+j}}{(q)_{n-j}(aq)_{n+j}}\alpha_{j}(a,k),
\end{equation}}
then so does the pair $(\alpha_{n}'(a,k),\,\beta_{n}'(a,k))$, where
{\allowdisplaybreaks
\begin{align}\label{wpn1}
\alpha_{n}'(a,k)&=\frac{(\rho_1, \rho_2)_n}{(aq/\rho_1,
aq/\rho_2)_n}\left(\frac{k}{c}\right)^n\alpha_{n}(a,c),\\
\beta_{n}'(a,k)&=\frac{(k\rho_1/a,k\rho_2/a)_n}{(aq/\rho_1,
aq/\rho_2)_n} \notag\\
&\phantom{as}\times \sum_{j=0}^{n} \frac{(1-c
q^{2j})(\rho_1,\rho_2)_j(k/c)_{n-j}(k)_{n+j}}{(1-c)(k\rho_1/a,k\rho_2/a)_n(q)_{n-j}(qc)_{n+j}}
\left(\frac{k}{c}\right)^j\beta_{j}(a,c), \notag
\end{align}
}with  $c=k\rho_1 \rho_2/aq$.  A pair of sequences satisfying
\eqref{WPpair} is termed a \emph{WP-Bailey pair}.  The process may
be iterated to produce a sequence of WP-Bailey pairs from an initial
pair, hence the designation ``WP-Bailey chain" for this process.
Andrews also described a second WP-Bailey chain in \cite{A01}, and
these two constructions allow a ``tree" of WP-Bailey pairs to be
generated from a single WP-Bailey pair.

The implications of these two branches were further investigated  in
\cite{AB02} by Andrews and Berkovich. An elliptic generalization of
Andrews first WP-Bailey chain  was derived in \cite{S02} by
Spiridonov. Warnaar \cite{W03} added four additional branches to the
WP-Bailey tree, two of which had generalizations to the elliptic
level. More recently, Liu and Ma \cite{LM08} introduced the idea of
a general WP-Bailey chain, and added one new branch to the WP-Bailey
tree. In \cite{MZ09}, the authors added three new WP-Bailey chains.

If $k=0$, the pair of sequences become what is termed a \emph{Bailey
pair relative to $a$}. Bailey \cite{B47, B49}  used the $q$-Gauss
sum,
\begin{equation}\label{qgauss}
_2\phi_1 (a,b;c;q,c/ab)=
\frac{(c/a,c/b;q)_{\infty}}{(c,c/ab;q)_{\infty}},
\end{equation}to get that, if $(\alpha_n, \beta_n)$ is a
Bailey pair relative to $a$, then
\begin{equation}\label{Baileyeq}
\sum_{n=0}^{\infty}(y,z;q)_{n}\left(\frac{aq}{yz}\right)^{n}\beta_n
=\frac{\left(\frac{aq}{y},\frac{aq}{z};q\right)_{\infty}}{\left(aq,
\frac{aq}{yz};q\right)_{\infty}}\sum_{n=0}^{\infty}
\frac{(y,z;q)_{n}}{\left(\frac{aq}{y},\frac{aq}{z};q\right)_n}\left
(\frac{x}{yz}\right)^{n}\alpha_n.
\end{equation}

As might be expected, Andrews generalization  of a Bailey pair leads
to a generalization of \eqref{Baileyeq}. Indeed Andrews WP-Bailey
chain at \eqref{wpn1}  can easily be shown to imply the following
result (substitute the expression for $\alpha_{n}'(a,k)$ in
\eqref{WPpair}, set the two expressions for $\beta_{n}'(a,k)$ equal,
and let $n \to \infty$). Note that setting $k=0$ below recovers
Bailey's transformation at \eqref{Baileyeq}.

\begin{theorem}\label{6t1}
Under suitable convergence conditions, if
$(\alpha_n(a,k),\beta_n(a,k))$ satisfy \eqref{WPpair}, then
\begin{multline}\label{6betaneq}
\sum_{n=0}^{\infty} \frac{(1-kq^{2n})(\rho_1,\rho_2;q)_n }{(1-k)(k
q/\rho_1,k q/\rho_2;q)_n}\,\left(\frac{a q}{\rho_1 \rho_2}\right)^n
\beta_n(a,k)=\\\frac{(k q,k q/\rho_1\rho_2,a q/\rho_1,a
q/\rho_2;q)_{\infty}}{(k q/\rho_1,k q/\rho_2,a q/\rho_1 \rho_2,a
q;q)_{\infty}}
\sum_{n=0}^{\infty} \frac{(\rho_1,\rho_2;q)_{n}} {(a q/\rho_1,a
q/\rho_2;q)_{n}}\left ( \frac{a q}{\rho_1 \rho_2}\right)^n
\alpha_n(a,k).
\end{multline}
\end{theorem}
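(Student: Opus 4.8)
The plan is to follow exactly the recipe the authors sketched in the sentence immediately preceding the theorem: start from Andrews' WP-Bailey chain \eqref{wpn1}, which says that if $(\alpha_n(a,k),\beta_n(a,k))$ is a WP-Bailey pair then so is $(\alpha_n'(a,k),\beta_n'(a,k))$. Being a WP-Bailey pair means the primed sequences themselves satisfy \eqref{WPpair}. So first I would write down \eqref{WPpair} for the primed pair, i.e.
\[
\beta_{n}'(a,k) = \sum_{j=0}^{n}
\frac{(k/a)_{n-j}(k)_{n+j}}{(q)_{n-j}(aq)_{n+j}}\alpha_{j}'(a,k),
\]
and then substitute the explicit formula for $\alpha_j'(a,k)$ from \eqref{wpn1} into the right-hand side. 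This gives a second, independent expression for $\beta_n'(a,k)$. Setting it equal to the defining expression for $\beta_n'(a,k)$ given directly in \eqref{wpn1} yields an identity between two double sums (after cancelling the common prefactor $(k\rho_1/a,k\rho_2/a)_n/(aq/\rho_1,aq/\rho_2)_n$ and relabelling $c\mapsto k$, since we want the identity in terms of a pair relative to $(a,k)$ rather than $(a,c)$).

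Next I would let $n\to\infty$ in this finite identity. On the left side of \eqref{wpn1}'s $\beta_n'$-formula, the inner sum $\sum_{j=0}^n \frac{(1-kq^{2j})(\rho_1,\rho_2)_j(k/k)_{n-j}(k)_{n+j}}{\cdots}$ — wait, more carefully: after the substitution and simplification one is left with two single sums in the limit. The key limiting facts are the standard ones: $(q)_{n-j}\to(q)_\infty$-type behaviour is handled by noting $\lim_{n\to\infty}(x)_{n+j}/(y)_{n+j} = (x)_\infty/(y)_\infty$ and $\lim_{n\to\infty}(x)_{n-j}/(q)_{n-j}\cdot(\text{stuff})$ stabilizes, so ratios of the shifted Pochhammer symbols tend to infinite products. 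Assembling the surviving infinite products produces exactly the constant $(kq,kq/\rho_1\rho_2,aq/\rho_1,aq/\rho_2;q)_\infty/(kq/\rho_1,kq/\rho_2,aq/\rho_1\rho_2,aq;q)_\infty$ in \eqref{6betaneq}, while the two single sums become the $\beta$-side and $\alpha$-side of \eqref{6betaneq} respectively, with the weight $(1-kq^{2n})(\rho_1,\rho_2;q)_n/(1-k)(kq/\rho_1,kq/\rho_2;q)_n\cdot(aq/\rho_1\rho_2)^n$ emerging on the $\beta_n$ side and $(\rho_1,\rho_2;q)_n/(aq/\rho_1,aq/\rho_2;q)_n\cdot(aq/\rho_1\rho_2)^n$ on the $\alpha_n$ side.

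The one genuine subtlety — the main obstacle — is the interchange of the limit $n\to\infty$ with the infinite summation index on each side, which is precisely what the phrase \emph{``Under suitable convergence conditions''} in the statement is there to cover. In a rigorous write-up one would either invoke dominated convergence with an appropriate hypothesis on the growth of $\alpha_n,\beta_n$ (e.g. that the relevant series converge absolutely and uniformly), or simply state the identity as a formal power series identity in which case the termwise limit is automatic. I would take the latter, lightweight route here, remarking that for each fixed power of $q$ only finitely many terms contribute, so the coefficient-wise limit is legitimate; alternatively one cites the same convergence hypotheses already implicit in the classical Bailey transformation \eqref{Baileyeq}. Everything else is bookkeeping: carefully tracking which Pochhammer ratios survive the limit and verifying they recombine into the four-over-four infinite product quotient. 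As a sanity check I would set $k=0$ and confirm that \eqref{6betaneq} collapses to Bailey's \eqref{Baileyeq}, exactly as the authors note, and also check the degenerate case $\rho_1\rho_2 = kq$ or similar to make sure no spurious factors have been introduced.
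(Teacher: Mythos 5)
Your proposal follows exactly the route the paper itself takes (the authors give only the one-line recipe in the paragraph before the theorem: substitute $\alpha_n'$ into \eqref{WPpair}, equate the two expressions for $\beta_n'$, relabel $c\mapsto k$, and let $n\to\infty$ so that the ratios $(x)_{n\pm j}/(y)_{n\pm j}$ become $j$-independent infinite products), and your bookkeeping of where the four-over-four product quotient and the two weights come from is sound. The only caveats are cosmetic: the prefactor $(aq/\rho_1,aq/\rho_2)_n$ does not cancel but survives as $(aq/\rho_1,aq/\rho_2)_\infty$ in the constant, and the convergence caveat you flag is exactly what the paper's ``under suitable convergence conditions'' is meant to absorb.
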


In the present paper we investigate what at first glance  may appear
to be a trivial  special case of Theorem \ref{6t1}.

\begin{corollary}\label{6c1}
If $\beta_n = \sum_{r=0}^{n} \alpha_{r}$,  then assuming both series
converge,
\begin{equation}\label{6simsum}
\sum_{n=0}^{\infty} \frac{(q\sqrt{xyz},-q\sqrt{xyz}, y, z;q)_{n}x^{n} \beta_n}{(\sqrt{xyz},-\sqrt{xyz},qxy,qxz;q)_n} \\
= \frac{\left(1-xy\right)\left(1-xz\right)}
{(1-x)\left(1-xyz\right)} \sum_{n=0}^{\infty} \frac{(y,z;q)_{n}x^{n}
\alpha_n}{(xy,xz;q)_n}.
\end{equation}
\end{corollary}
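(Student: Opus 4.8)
The plan is to obtain Corollary \ref{6c1} as the specialization $k = aq$ of Theorem \ref{6t1}, using the observation recorded in the abstract that when $k = aq$ the defining relation \eqref{WPpair} collapses to $\beta_n = \sum_{j=0}^{n}\alpha_j$. First I would set $k = aq$ throughout \eqref{6betaneq}. On the left-hand side, the factor $(1-kq^{2n})/(1-k)$ becomes $(1-aq^{2n+1})/(1-aq)$, and the Pochhammer symbols $(kq/\rho_1, kq/\rho_2;q)_n$ become $(aq^2/\rho_1, aq^2/\rho_2;q)_n$; on the right-hand side the infinite-product prefactor and the hypergeometric-type quotient simplify similarly. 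At this stage the identity reads, after the substitution $k = aq$, as an identity in the three free parameters $a$, $\rho_1$, $\rho_2$ (and $q$), with $\beta_n = \sum_{j=0}^n \alpha_j$.

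Next I would perform the change of variables that turns the $a,\rho_1,\rho_2$ parametrization into the $x,y,z$ parametrization appearing in \eqref{6simsum}. The natural choice is $\rho_1 = y$, $\rho_2 = z$, and $aq/(\rho_1\rho_2) = x$, i.e. $a = xyz/q$, so that $aq = xyz$. With this substitution one checks that $\sqrt{k} = \sqrt{aq} = \sqrt{xyz}$, so the quadratic factor $(1-kq^{2n}) = (1 - xyz\, q^{2n})$ factors as $(1-q^n\sqrt{xyz})(1+q^n\sqrt{xyz})$ divided by the analogous $n=0$ term — precisely producing the $(q\sqrt{xyz},-q\sqrt{xyz};q)_n/(\sqrt{xyz},-\sqrt{xyz};q)_n$ pieces in \eqref{6simsum}. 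Likewise $kq/\rho_1 = aq^2/y = q\cdot(xyz/y) = qxz$ and $kq/\rho_2 = qxy$, matching the denominators $(qxy,qxz;q)_n$ on the left, while on the right $aq/\rho_1 = xz$ and $aq/\rho_2 = xy$ give $(xy,xz;q)_n$, and $(aq/(\rho_1\rho_2))^n = x^n$ supplies the powers of $x$ on both sides. It then remains to verify that the constant prefactor $(kq, kq/\rho_1\rho_2, aq/\rho_1, aq/\rho_2;q)_\infty / (kq/\rho_1, kq/\rho_2, aq/\rho_1\rho_2, aq;q)_\infty$ collapses under $k = aq$, $a = xyz/q$: three of the four infinite products in the numerator cancel against three in the denominator (since $kq/\rho_1 = aq^2/y$ versus $aq/\rho_1 = aq/y$ differ only in finitely many factors, etc.), leaving exactly $(1-xy)(1-xz)/[(1-x)(1-xyz)]$ after the telescoping of the $q$-shifted infinite products.

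The genuinely delicate point — and the one I would treat most carefully — is not the algebra of the substitution but the \emph{convergence and legitimacy} issue: Theorem \ref{6t1} is stated ``under suitable convergence conditions'' for a \emph{WP-Bailey pair} $(\alpha_n(a,k),\beta_n(a,k))$, whereas in the Corollary we are handed an \emph{arbitrary} pair of sequences with $\beta_n = \sum_{r=0}^n \alpha_r$ and no a priori dependence on $a,k$. One must argue that every such pair does arise as the $k = aq$ specialization of an honest WP-Bailey pair — which is immediate, since \eqref{WPpair} at $k = aq$ is an identity with no constraint beyond $\beta_n = \sum \alpha_r$, so one may simply \emph{define} $\alpha_n(a, aq) := \alpha_n$ and $\beta_n(a,aq) := \beta_n$ — and then note that the only hypothesis actually needed is that the two series in \eqref{6simsum} converge, which is exactly what the Corollary assumes. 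So the main obstacle is really a bookkeeping one: being sure that the limit $n\to\infty$ used to derive \eqref{6betaneq}, together with the parameter specialization $k = aq$, does not introduce any hidden degeneracy (e.g. division by zero in a prefactor, or a term-by-term limit that fails), and that the stated convergence hypothesis suffices to justify it. I expect this to be dispatched in a sentence or two once the substitution is written out. A final sanity check I would include: setting further $z = aq/y$, or comparing with the $k=0$ limit recovering \eqref{Baileyeq}, to confirm the prefactor has been simplified correctly.
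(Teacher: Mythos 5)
Your proposal is correct and follows exactly the paper's route: the paper's entire proof is the specialization $k=xyz$, $a=xyz/q$, $\rho_1=y$, $\rho_2=z$ in Theorem \ref{6t1}, which is precisely your $k=aq$ substitution, and your verification of the prefactor collapse and the $(1-kq^{2n})/(1-k)$ telescoping is accurate. Your additional remarks on convergence and on why an arbitrary pair with $\beta_n=\sum_{r=0}^n\alpha_r$ qualifies are sound elaborations of details the paper leaves implicit.
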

\begin{proof}
Let $k=xyz$, $a=xyz/q$, $\rho_1=y$ and $\rho_2=z$  in Theorem
\ref{6t1}.
\end{proof}

This seemingly trivial relation connecting the $\alpha_n$'s with the
$\beta_n$'s (of course the pairs of sequences $(\alpha_n,\beta_n)$
defined by this relation are no longer Bailey pairs or WP-Bailey
pairs)  has some interesting consequences, including several new
basic hypergeometric summation formulae, a connection to the
Prouhet-Tarry-Escott problem, some new identities of the
Rogers-Ramanujan-Slater type, some new expressions for false theta
series as basic hypergeometric series, and new transformation
formulae for poly-basic hypergeometric series.


As usual, for $a$ and $q$ complex numbers with $|q|<1$, define
\begin{align*}
&(a)_0 =(a;q)_0 :=1, \hspace{20pt} (a)_n=(a;q)_n
:=\prod_{j=0}^{n-1}(1-a q^j), \text{ for } n\in \mathbb{N},\\
&(a_1;q)_n(a_2;q)_n \dots (a_k;q)_n = (a_1,a_2,\dots, a_k;q)_n,\\
&(a;q)_{\infty}:=\prod_{j=0}^{\infty}(1-a q^j), \hspace{20pt}\\
&(a_1;q)_{\infty}(a_2;q)_{\infty} \dots (a_k;q)_{\infty} =
(a_1,a_2,\dots, a_k;q)_{\infty}.
\end{align*}
    An $_{r} \phi _{s}$ basic hypergeometric series is defined by
\begin{equation*} _{r} \phi _{s} \left [
\begin{matrix}
a_{1},\dots, a_{r}\\
b_{1}, \dots, b_{s}
\end{matrix}
; q,x \right ]=  \sum_{n=0}^{\infty} \frac{(a_{1}\dots a_{r};q)_{n}}
{(q,b_{1}\dots b_{s};q)_{n}} \left( (-1)^{n} q^{n(n-1)/2} \right
)^{s+1-r}x^{n}.
\end{equation*}
 For future use we also recall the $q$-binomial theorem,
\begin{equation}
\label{qbinom} \sum_{n=0}^{\infty}\frac{(a;q)_n}{(q;q)_n}z^n =
\frac{(az;q)_{\infty}}{(z;q)_{\infty}}.
\end{equation}

\section{Various Transformation- and Summation\\ Formulae for Basic Hypergeometric series}

We next derive a number of transformation formulae for basic
hypergeometric series, transformations that give rise  to summation
formulae for particular choices of the parameters. We believe these
to be new.

\begin{corollary}\label{6c1aa}
For $q$ and $x$  inside the unit disc,
\begin{multline}\label{6aln=1a}
\sum_{n=0}^{\infty} \frac{(q\sqrt{xyz},-q\sqrt{xyz}, y,
z;q)_{2n}x^{2n} }
{(\sqrt{xyz},-\sqrt{xyz},qxy,qxz;q)_{2n}} \\
= \frac{\left(1-xy\right)\left(1-xz\right)}
{(1-x)\left(1-xyz\right)} \sum_{n=0}^{\infty}
\frac{(y,z;q)_{n}(-x)^{n}}{(xy,xz;q)_n}.
\end{multline}
\begin{equation}\label{c61aeq2}
\sum_{n=0}^{\infty}\frac{(1-q^{2n+1}/x)(q/x^2;q)_{2n}
x^{2n}}{(q;q)_{2n+1}}
=\frac{1}{1+x}\frac{(q/x;q)_{\infty}}{(x;q)_{\infty}}, \,\,\,x\not
=0.
\end{equation}
\end{corollary}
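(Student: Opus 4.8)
The plan is to derive both identities of Corollary~\ref{6c1aa} from Corollary~\ref{6c1} by making a judicious choice of the sequence $\{\alpha_r\}$. Since \eqref{6simsum} holds for any pair with $\beta_n = \sum_{r=0}^n \alpha_r$, the natural idea is to pick $\alpha_r$ so that the partial sums $\beta_n$ telescope or collapse. For the first identity \eqref{6aln=1a}, I would take $\alpha_r = (-1)^r$ (possibly up to an overall shift of index or a factor), so that $\beta_n = \sum_{r=0}^n (-1)^r$ equals $1$ when $n$ is even and $0$ when $n$ is odd. Substituting this $\beta_n$ into the left-hand side of \eqref{6simsum} kills all the odd-indexed terms, leaving exactly $\sum_{n=0}^\infty \frac{(q\sqrt{xyz},-q\sqrt{xyz},y,z;q)_{2n}x^{2n}}{(\sqrt{xyz},-\sqrt{xyz},qxy,qxz;q)_{2n}}$, while the right-hand side becomes $\frac{(1-xy)(1-xz)}{(1-x)(1-xyz)}\sum_{n=0}^\infty \frac{(y,z;q)_n(-x)^n}{(xy,xz;q)_n}$, which is precisely \eqref{6aln=1a}. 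I would need to double-check the parity bookkeeping (whether the nonzero $\beta_n$ occur at even or odd $n$, and whether an index shift in $\alpha_r$ is needed) so that the surviving series starts at $n=0$ as written.

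For the second identity \eqref{c61aeq2}, I would specialize the parameters $y$ and $z$ in \eqref{6aln=1a} to reduce the $_r\phi_s$-type series on each side to the simpler single-numerator series appearing there. The target left-hand side $\sum_{n=0}^\infty \frac{(1-q^{2n+1}/x)(q/x^2;q)_{2n}x^{2n}}{(q;q)_{2n+1}}$ suggests sending one of $y,z$ to a value that makes $(\sqrt{xyz},-\sqrt{xyz};q)$ combine with $(qxy,qxz;q)$ into a clean quotient — for instance choosing $z$ so that $xyz = q$ (forcing the $\pm\sqrt{xyz}$ factors to become $\pm\sqrt{q}$-type quantities that telescope against the $q$-factorials) or letting one parameter tend to $\infty$ or to $0$. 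The factor $(1-q^{2n+1}/x)$ on the left and the factor $\frac{1}{1+x}$ on the right are the fingerprints of the $(\pm q\sqrt{xyz};q)_{2n}$ numerator and the $\frac{1-xy}{1-x}$-type prefactor collapsing, so I would match these up to pin down the correct substitution; the right-hand side $\sum \frac{(y,z;q)_n(-x)^n}{(xy,xz;q)_n}$ should then reduce to a $q$-binomial-type series summable by \eqref{qbinom}, giving $\frac{(q/x;q)_\infty}{(x;q)_\infty}$.

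The main obstacle will be the second part: getting the parameter specialization exactly right so that \emph{both} sides of \eqref{6aln=1a} simplify simultaneously to the claimed closed forms, and handling the limiting/convergence issues if a parameter is sent to $0$ or $\infty$ (one must ensure the interchange of limit and summation is legitimate, which is why the hypothesis ``$q$ and $x$ inside the unit disc'' and the restriction $x\neq 0$ appear). A secondary technical point is verifying that after the specialization the right-hand series is genuinely a $_2\phi_1$ or $_1\phi_0$ that \eqref{qbinom} (or the $q$-Gauss sum \eqref{qgauss}) evaluates in closed form; this is routine but must be checked term by term, particularly the emergence of the $(q/x^2;q)_{2n}$ and $(q;q)_{2n+1}$ on the left, which come from a Pochhammer splitting of the form $(w;q)_{2n} = (w,wq;q^2)_n$ applied to the even-indexed series.
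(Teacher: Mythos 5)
Your treatment of \eqref{6aln=1a} is complete and is exactly the paper's argument: with $\alpha_r=(-1)^r$ one has $\beta_n=1$ for $n$ even and $\beta_n=0$ for $n$ odd, the odd-indexed terms drop out of the left side of \eqref{6simsum} with no index shift needed, and the right side is the stated series. No issue there.

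For \eqref{c61aeq2}, however, you have described a strategy rather than a proof, and the one concrete specialization you float ($xyz=q$) is not the right one. The choice that works is $y=q/x$, $z=q/x^2$, so that $xyz=q^2/x^2$ is a perfect square and $\sqrt{xyz}=q/x$. On the right side this gives $xy=q$ and $xz=q/x$, so $(y;q)_n=(q/x;q)_n$ cancels $(xz;q)_n$ and the series collapses to $\sum_n (q/x^2;q)_n(-x)^n/(q;q)_n$, which \eqref{qbinom} evaluates as $(-q/x;q)_\infty/(-x;q)_\infty$. On the left side $(q\sqrt{xyz};q)_{2n}=(q^2/x;q)_{2n}$ cancels $(qxz;q)_{2n}$ and $(y;q)_{2n}$ cancels $(\sqrt{xyz};q)_{2n}$, while $(-q^2/x;q)_{2n}/(-q/x;q)_{2n}=(1+q^{2n+1}/x)/(1+q/x)$ and $(qxy;q)_{2n}=(q^2;q)_{2n}=(q;q)_{2n+1}/(1-q)$ produce the remaining factors; after clearing the constants one obtains $\sum_n (1+q^{2n+1}/x)(q/x^2;q)_{2n}x^{2n}/(q;q)_{2n+1}=\frac{1}{1-x}\,(-q/x;q)_\infty/(-x;q)_\infty$. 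The essential final step, which you omit entirely, is to replace $x$ by $-x$: this converts $(1+q^{2n+1}/x)$ into $(1-q^{2n+1}/x)$ and the right side into $\frac{1}{1+x}\,(q/x;q)_\infty/(x;q)_\infty$, matching \eqref{c61aeq2}. Without it the signs do not agree. Also, no splitting $(w;q)_{2n}=(w,wq;q^2)_n$ is needed anywhere; the factors $(q/x^2;q)_{2n}$ and $(q;q)_{2n+1}$ arise directly in base $q$, and no limit as a parameter tends to $0$ or $\infty$ is taken.
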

\begin{proof}
In Corollary \ref{6c1} let $\alpha_r=(-1)^r$ to get \eqref{6aln=1a}.
Then set $y=q/x$, $z=q/x^2$, apply \eqref{qbinom} to the right side,
replace $x$ by $-x$ and \eqref{c61aeq2} follows.
\end{proof}

\begin{corollary}\label{6c1a}
For $q$ and $x$  inside the unit disc,
\begin{multline}\label{6aln=1}
\sum_{n=0}^{\infty} \frac{(q\sqrt{xyz},-q\sqrt{xyz}, y, z;q)_{n}x^{n} (n+1)}
{(\sqrt{xyz},-\sqrt{xyz},qxy,qxz;q)_n} \\
= \frac{\left(1-xy\right)\left(1-xz\right)}
{(1-x)\left(1-xyz\right)} \sum_{n=0}^{\infty}
\frac{(y,z;q)_{n}x^{n}}{(xy,xz;q)_n}.
\end{multline}
\begin{equation}\label{c61eq2}
\sum_{n=0}^{\infty}\frac{(1+q^{n+1}/x)(q/x^2;q)_nx^n(n+1)}{(q;q)_{n+1}}
=\frac{1}{1-x}\frac{(q/x;q)_{\infty}}{(x;q)_{\infty}}.
\end{equation}
\end{corollary}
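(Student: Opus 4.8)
The plan is to specialize Corollary \ref{6c1} in exactly the same spirit as the proof of Corollary \ref{6c1aa}, but with a different choice of the sequence $\{\alpha_r\}$. To obtain the weight $(n+1)$ on the $\beta$-side of \eqref{6simsum}, I would take $\alpha_r = 1$ for all $r \geq 0$. Then $\beta_n = \sum_{r=0}^{n}\alpha_r = \sum_{r=0}^{n} 1 = n+1$, so the left-hand side of \eqref{6simsum} becomes exactly the left-hand side of \eqref{6aln=1}, while the right-hand side of \eqref{6simsum} becomes $\frac{(1-xy)(1-xz)}{(1-x)(1-xyz)}\sum_{n=0}^{\infty}\frac{(y,z;q)_n x^n}{(xy,xz;q)_n}$, which is precisely the right-hand side of \eqref{6aln=1}. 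This establishes \eqref{6aln=1} directly, with no computation beyond recognizing the partial sum of the constant sequence.

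For \eqref{c61eq2}, I would then specialize the parameters in \eqref{6aln=1} by setting $y = q/x$ and $z = q/x^2$, mirroring the second half of the proof of Corollary \ref{6c1aa}. With these choices one has $xy = q$ and $xz = q/x$, so $qxy = q^2$ and $qxz = q^2/x$; I would track how the products $(q\sqrt{xyz},-q\sqrt{xyz};q)_n$ and $(\sqrt{xyz},-\sqrt{xyz};q)_n$ combine — since $\sqrt{xyz} = \sqrt{q^2/x} $ these appear as a ratio $\frac{(q\sqrt{xyz},-q\sqrt{xyz};q)_n}{(\sqrt{xyz},-\sqrt{xyz};q)_n}$ which telescopes to $\frac{1-xyz\,q^{2n}}{1-xyz}$ times a shift, ultimately producing the factor $(1+q^{n+1}/x)$ and an index shift that turns the summand into $\frac{(1+q^{n+1}/x)(q/x^2;q)_n x^n (n+1)}{(q;q)_{n+1}}$ after absorbing the constant prefactor $\frac{(1-xy)(1-xz)}{(1-x)(1-xyz)}$. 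On the right-hand side, $(y,z;q)_n/(xy,xz;q)_n = (q/x,q/x^2;q)_n/(q,q/x;q)_n$; the $(q/x;q)_n$ cancels, leaving $\sum_{n\geq 0}\frac{(q/x^2;q)_n}{(q;q)_n}x^n$, to which I would apply the $q$-binomial theorem \eqref{qbinom} with $a = q/x^2$, $z = x$, giving $\frac{(q/x;q)_\infty}{(x;q)_\infty}$. Combined with the residual constant $\frac{1}{1-x}$ coming from the prefactor and the cancellations, this yields \eqref{c61eq2}.

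The main obstacle I anticipate is purely bookkeeping: carefully verifying that the four $q$-Pochhammer factors involving $\sqrt{xyz}$ collapse correctly after the substitution $y=q/x,\ z=q/x^2$, that the index shift from $(q;q)_{2n}$-type denominators lands on $(q;q)_{n+1}$, and that the constant prefactor $\frac{(1-xy)(1-xz)}{(1-x)(1-xyz)}$ simplifies to $\frac{1}{1-x}$ under these parameter values. There is no conceptual difficulty — the identity \eqref{6aln=1} is immediate from Corollary \ref{6c1} — so essentially all the work is in the routine simplification of $q$-shifted factorials, which the paper elsewhere treats as standard.
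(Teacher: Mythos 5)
Your proposal is correct and follows exactly the paper's own route: set $\alpha_n=1$ in Corollary \ref{6c1} so that $\beta_n=n+1$, which gives \eqref{6aln=1} immediately, and then specialize $\{y,z\}=\{q/x,q/x^2\}$ and apply the $q$-binomial theorem \eqref{qbinom} to the right side to obtain \eqref{c61eq2}. The bookkeeping you defer does work out: the prefactor reduces to $\tfrac{1-q}{(1-x)(1+q/x)}$ and the surviving summand factors give $\tfrac{(1-q)(1+q^{n+1}/x)}{(1+q/x)(q;q)_{n+1}}(q/x^2;q)_n x^n(n+1)$, so the common factor $\tfrac{1-q}{1+q/x}$ cancels as needed.
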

\begin{proof}
Set $\alpha_n=1$ in Corollary \ref{6c1} to get \eqref{6aln=1}. The
identity at \eqref{c61eq2} follows from \eqref{6aln=1} upon setting
$y=q/x^2$, $z=q/x$, using \eqref{qbinom} to sum the right side and
then simplifying.
\end{proof}


\begin{corollary}\label{6c2}
For $q$, $x$ and $u$ all inside the unit disc,
\begin{equation}\label{6qbineq}
\sum_{n=0}^{\infty}
\frac{(1+q^{n+1}/x)(q/x^2;q)_{n}x^{n}(1-u^{n+1})}{(q;q)_{n+1}} =
\frac{1-u} {1-x}\frac{(qu/x;q)_{\infty}}{(xu;q)_{\infty}}.
\end{equation}
\end{corollary}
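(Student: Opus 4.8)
The plan is to specialize Corollary \ref{6c1} to a geometric $\alpha$-sequence and then to collapse the resulting right-hand side with the $q$-binomial theorem. Concretely, I would take $\alpha_r = u^r$, so that the defining relation $\beta_n = \sum_{r=0}^n \alpha_r$ gives the closed form $\beta_n = (1-u^{n+1})/(1-u)$. Both series then converge for $|x|,|u|<1$: the factors $(q/x^2;q)_n$ and $(q;q)_{n+1}$ tend to finite nonzero limits and $\beta_n$ stays bounded, so the summands on each side of \eqref{6simsum} decay geometrically. After substituting these sequences into \eqref{6simsum}, the factor $(1-u^{n+1})$ sits on the left and $\alpha_n x^n = (xu)^n$ on the right.

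Next I would set $y = q/x^2$ and $z = q/x$, exactly as in the proof of Corollary \ref{6c1a}. With this choice $xyz = q^2/x^2$ and $\sqrt{xyz} = q/x$, so on the left-hand side the factor $(q\sqrt{xyz};q)_n = (q^2/x;q)_n$ cancels against $(qxy;q)_n = (q^2/x;q)_n$, while $(\sqrt{xyz};q)_n = (q/x;q)_n$ cancels against $(z;q)_n = (q/x;q)_n$. What survives is
\[
\frac{(-q^2/x;q)_n\,(q/x^2;q)_n}{(-q/x;q)_n\,(q^2;q)_n}
=\frac{1+q^{n+1}/x}{1+q/x}\cdot\frac{(1-q)\,(q/x^2;q)_n}{(q;q)_{n+1}}.
\]
On the right-hand side $(xy,xz;q)_n = (q/x,q;q)_n$ kills the $(q/x;q)_n$ coming from $(z;q)_n$ and leaves $(q/x^2;q)_n/(q;q)_n$, while the prefactor $(1-xy)(1-xz)/[(1-x)(1-xyz)]$ reduces to $(1-q)/[(1-x)(1+q/x)]$. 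Cancelling the common constant $(1-q)/(1+q/x)$ from both sides then gives
\[
\sum_{n=0}^{\infty}\frac{(1+q^{n+1}/x)(q/x^2;q)_n x^n}{(q;q)_{n+1}}\cdot\frac{1-u^{n+1}}{1-u}
=\frac{1}{1-x}\sum_{n=0}^{\infty}\frac{(q/x^2;q)_n}{(q;q)_n}(xu)^n.
\]

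Finally I would apply the $q$-binomial theorem \eqref{qbinom} with $a=q/x^2$ and $z=xu$ (legitimate since $|xu|<1$) to evaluate the right-hand sum as $(qu/x;q)_\infty/(xu;q)_\infty$, and then multiply through by $1-u$ to obtain \eqref{6qbineq}. I expect the only genuine work to be the bookkeeping in the cancellation step: tracking which Pochhammer symbols drop out after $y=q/x^2$, $z=q/x$, and checking that the leftover telescoping products produce exactly $1+q^{n+1}/x$ and $(q;q)_{n+1}$ and not their ``doubled'' analogues; once that is settled, the identity is an immediate consequence of \eqref{qbinom}.
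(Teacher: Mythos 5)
Your proof is correct and follows essentially the same route as the paper: the authors likewise take $\alpha_n=u^n$ with $\{y,z\}=\{q/x,q/x^2\}$ in Corollary \ref{6c1} and finish with the $q$-binomial theorem \eqref{qbinom}. Your cancellation bookkeeping (the surviving factors $1+q^{n+1}/x$ and $(q;q)_{n+1}$, and the prefactor reducing to $(1-q)/[(1-x)(1+q/x)]$) checks out, so the only difference is that you have written out the simplification the paper leaves implicit.
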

\begin{proof}
Set $\alpha_n=u^n$, $y=q/x$ and $z=q/x^2$. Now apply the
$q$-binomial theorem \eqref{qbinom} to the right side.
\end{proof}
Remark: The identity at \eqref{c61eq2} above also follows from \eqref{6qbineq}, upon dividing through by $1-u$ and letting $u \to 1$, but we prefer to include both derivations.

\begin{corollary}\label{6c6}
\begin{equation}\label{5phi41}
_{5} \phi _{4} \left [\begin{matrix}
q\sqrt{xyz},-q\sqrt{xyz}, y,z, cq\\
\sqrt{xyz},-\sqrt{xyz}, qxy,qxz
\end{matrix}
; q,x \right ]\\
= \frac{(1-xy)(1-xz)}{ (1-x)(1-xyz)} \;_{3}\phi_{2} \left
[\begin{matrix}
y, z, c\\
xy,xz
\end{matrix}
; q,x q \right ].
\end{equation}
\begin{equation}\label{63phi22}
_{3} \phi _{2} \left [\begin{matrix}
-qxy, y, x\\
-xy, qx^2y
\end{matrix}
; q,x \right ]\\
= \frac{1}{1+xy}\frac{(x^2,qxy;q)_\infty}{(qx^2y,x;q)_\infty}.
\end{equation}
\end{corollary}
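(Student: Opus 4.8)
The plan is to deduce both identities from Corollary~\ref{6c1}: the first, \eqref{5phi41}, from a single clean choice of $\beta_n$, and the second, \eqref{63phi22}, as a specialization of \eqref{5phi41} that turns out to be summable by the $q$-Gauss sum \eqref{qgauss}.

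For \eqref{5phi41} I would take $\beta_n=(cq;q)_n/(q;q)_n$ in Corollary~\ref{6c1}. Since $\alpha_r=\beta_r-\beta_{r-1}$ for $r\ge 1$ and $\alpha_0=\beta_0=1$, a short manipulation (pull out the common factor $(cq;q)_{n-1}/(q;q)_{n-1}$ and combine the two fractions over $1-q^n$) gives $\alpha_n=(c;q)_n\,q^n/(q;q)_n$, a formula that is also correct at $n=0$. Substituting $\beta_n$ into the left side of \eqref{6simsum} reproduces the $_5\phi_4$ on the left of \eqref{5phi41} (recall that for a $_5\phi_4$ and a $_3\phi_2$ the exponent $s+1-r$ of the extra $q$-power in the definition is $0$), while substituting $\alpha_n$ into the right side of \eqref{6simsum} gives $\dfrac{(1-xy)(1-xz)}{(1-x)(1-xyz)}\displaystyle\sum_{n\ge 0}\dfrac{(y,z,c;q)_n}{(q,xy,xz;q)_n}(xq)^n$, which is exactly the stated multiple of a $_3\phi_2$ evaluated at $xq$. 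This proves \eqref{5phi41}.

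To get \eqref{63phi22} I would specialize \eqref{5phi41} by setting $z=xy$ (taking the branch $\sqrt{xyz}=xy$) and $cq=x$. With $z=xy$ the parameters $q\sqrt{xyz}=qxy$ and $\sqrt{xyz}=xy$ now appear in both the top and the bottom rows of the $_5\phi_4$, so those Pochhammer symbols cancel and the $_5\phi_4$ collapses to the $_3\phi_2$ on the left of \eqref{63phi22}. On the right of \eqref{5phi41} the prefactor becomes $\dfrac{(1-xy)(1-x^2y)}{(1-x)(1-x^2y^2)}=\dfrac{1-x^2y}{(1-x)(1+xy)}$, and the $_3\phi_2$ loses its $z=xy$ numerator parameter against the denominator parameter $xy$, becoming $_2\phi_1\left[\begin{matrix} y,\,x/q\\ x^2y\end{matrix};q,\,xq\right]$. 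Here the argument $xq$ is precisely $c/(ab)$ in the notation of \eqref{qgauss} with $(a,b,c)=(y,\,x/q,\,x^2y)$, so \eqref{qgauss} evaluates this $_2\phi_1$ as $(x^2,qxy;q)_\infty/(x^2y,xq;q)_\infty$. Multiplying by the prefactor and simplifying via $(1-x^2y)/(x^2y;q)_\infty=1/(qx^2y;q)_\infty$ and $1/[(1-x)(xq;q)_\infty]=1/(x;q)_\infty$ gives the right side of \eqref{63phi22}.

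The only genuinely non-mechanical step is spotting the specialization $z=xy$, $c=x/q$ in \eqref{5phi41}: it is what forces the very-well-poised pair to drop out of the $_5\phi_4$ while simultaneously tuning the companion series so that its argument equals $c/(ab)$ and the $q$-Gauss sum applies. Everything after that should be routine bookkeeping with $q$-Pochhammer symbols; the only points I expect to need a word of care are the choice of square-root branch and the generic non-degeneracy and convergence hypotheses already built into Corollary~\ref{6c1} and \eqref{qgauss}.
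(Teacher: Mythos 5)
Your proposal is correct and follows the paper's own route exactly: the same telescoping choice $\beta_n=(cq;q)_n/(q;q)_n$, $\alpha_n=(c;q)_n q^n/(q;q)_n$ in Corollary~\ref{6c1} to get \eqref{5phi41}, and the same specialization $c=x/q$, $z=xy$ followed by the $q$-Gauss sum \eqref{qgauss} to get \eqref{63phi22}. You have merely written out the bookkeeping (cancellation of the well-poised pair, the identification $c/ab=xq$, and the final product simplifications) that the paper leaves implicit.
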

\begin{proof}
We define $\alpha_0 =1$, and for $n>0$,
\begin{equation*}
 \alpha_n =
 \frac{(cq;q)_n}{(q;q)_n}-\frac{(cq;q)_{n-1}}{(q;q)_{n-1}}
 =\frac{(c;q)_n  }{(q;q)_n}q^{n}.
 \end{equation*}
 Substitution into  \eqref{6simsum} immediately gives \eqref{5phi41}.
Equation \eqref{63phi22} follows upon letting $c=x/q$, $z=xy$ and
using \eqref{qgauss} to sum the resulting right side and
simplifying.
\end{proof}


\section{Transformation Formulae for basic- and \\polybasic
Hypergeometric Series} Most (possibly all) summation formulae for
poly-basic hypergeometric series arise because the series involved
telescope. This means that the terms in such an identity may be
inserted in \eqref{6simsum} to produce a transformation formula for
polybasic hypergeometric series containing an additional base.
Setting all the bases equal to $q^m$, for some integer $m$, then
gives a transformation formula for basic hypergeometric series. We
give one example in the next corollary, which contains a
transformation formula connecting polybasic hypergeometric series
with five independent bases.

\begin{corollary}\label{pb2}
Let $P$, $p$, $Q$, $q$, $R$ and $x$ all lie inside the unit disc,
and let $a$, $b$, $c$,  $y$ and $z$ be complex numbers such that
 the denominators below are bounded away from zero. Then {\allowdisplaybreaks
\begin{multline}\label{poly2} \sum_{n=0}^{\infty}
\frac{(q\sqrt{xyz},-q\sqrt{xyz}, y,
z;q)_{n}}{(\sqrt{xyz},-\sqrt{xyz},qxy,qxz;q)_n} \frac{ (ap^2;p^2)_n
\left(b P^2;P^2 \right)_n  } {
\left(\frac{PQR}{p};\frac{PQR}{p}\right)_n \left(\frac{a p P Q}{c
R};\frac{p P Q}{R}\right)_n}\\
\times
 \frac{ (cR^2;R^2)_n
\left(\frac{a Q^2}{b c};Q^2 \right)_n  } { \left(\frac{a
pQR}{bP};\frac{pQR}{P}\right)_n \left(\frac{b c p P R}{Q};\frac{ p P
R}{Q}\right)_n}\,x^{n}
\\
= \frac{\left(1-xy\right)\left(1-xz\right)}
{(1-x)\left(1-xyz\right)} \times \\
\sum_{n=0}^{\infty} \frac{( y, z;q)_{n}}{(xy,xz;q)_n} \frac{
\left(1-a p^n P^n Q^n R^n\right)\left(1-b \frac{p^n P^n}{ Q^{n}
R^{n}}\right) \left(1-\frac{ P^n Q^n
  }{c p^{n} R^{n}}\right) \left(1-\frac{a p^n
   Q^n }{b cP^{n}R^{n}}\right)}{(1-a) (1-b)
   \left(1-\frac{1}{c}\right) \left(1-\frac{a}{b c}\right)}
\\
\times
 \frac{ (a;p^2)_n
\left(b ;P^2 \right)_n  } {
\left(\frac{PQR}{p};\frac{PQR}{p}\right)_n \left(\frac{a p P Q}{c
R};\frac{p P Q}{R}\right)_n}
 \frac{ (c;R^2)_n
\left(\frac{a }{b c};Q^2 \right)_n  } { \left(\frac{a
pQR}{bP};\frac{pQR}{P}\right)_n \left(\frac{b c p P R}{Q};\frac{ p P
R}{Q}\right)_n}\,(x R^2)^{n};
\end{multline}
} {\allowdisplaybreaks\begin{multline}\label{poly2q}
\sum_{n=0}^{\infty} \frac{(q\sqrt{xyz},-q\sqrt{xyz}, y,
z;q)_{n}}{(\sqrt{xyz},-\sqrt{xyz},qxy,qxz;q)_n} \frac{
(aq^m,bq^m,cq^m,\frac{a q^m}{b c};q^m)_n } {
\left(\frac{a}{c}q^m,\frac{a}{b}q^m,b cq^m,q^m;q^m\right)_n
}\,x^{n}=
\\
\frac{\left(1-xy\right)\left(1-xz\right)}{(1-x)\left(1-xyz\right)}
\sum_{n=0}^{\infty}\frac{(y,z;q)_{n}}{(xy,xz;q)_n}\frac{
(q^m\sqrt{a},-q^m\sqrt{a},a,b,c,\frac{a}{bc};q^m)_n(xq^m)^{n}}{
\left(\sqrt{a},-\sqrt{a},\frac{a}{c}q^m,\frac{a}{b}q^m,b
cq^m,q^m;q^m\right)_n}.
\end{multline}}
\end{corollary}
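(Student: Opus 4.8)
The plan is to obtain \eqref{poly2} from Corollary~\ref{6c1} by the telescoping principle mentioned at the start of this section, and then to obtain \eqref{poly2q} from \eqref{poly2} by specializing the bases. For \eqref{poly2}, take $\beta_n$ to be the quotient of shifted factorials that multiplies the very-well-poised factor $\tfrac{(q\sqrt{xyz},-q\sqrt{xyz},y,z;q)_n}{(\sqrt{xyz},-\sqrt{xyz},qxy,qxz;q)_n}$ and $x^n$ in the $n$th term on the left, namely
\[
\beta_n=\frac{(ap^2;p^2)_n\,(bP^2;P^2)_n\,(cR^2;R^2)_n\,\bigl(\tfrac{aQ^2}{bc};Q^2\bigr)_n}
{\bigl(\tfrac{PQR}{p};\tfrac{PQR}{p}\bigr)_n\bigl(\tfrac{apPQ}{cR};\tfrac{pPQ}{R}\bigr)_n\bigl(\tfrac{apQR}{bP};\tfrac{pQR}{P}\bigr)_n\bigl(\tfrac{bcpPR}{Q};\tfrac{pPR}{Q}\bigr)_n},
\]
and define $\alpha_n$ by requiring $x^n\alpha_n$ to equal the factor multiplying $(y,z;q)_n/(xy,xz;q)_n$ in the $n$th term on the right of \eqref{poly2}, so that $\alpha_n$ equals $R^{2n}$ times the remaining two factors there. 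With these choices, and with convergence guaranteed by the hypotheses of the corollary, \eqref{poly2} is precisely the conclusion of Corollary~\ref{6c1} for the pair $(\alpha_n,\beta_n)$, provided one shows that $\beta_n=\sum_{r=0}^{n}\alpha_r$.

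To verify $\beta_n=\sum_{r=0}^{n}\alpha_r$, note it is equivalent to $\alpha_0=\beta_0=1$ (immediate) together with $\alpha_n=\beta_n-\beta_{n-1}$ for $n\ge1$. Write $\beta_n=N_n/D_n$ with $N_n,D_n$ the displayed numerator and denominator products. A factor-by-factor computation gives
\begin{align*}
\frac{N_n}{N_{n-1}}&=(1-ap^{2n})(1-bP^{2n})(1-cR^{2n})\Bigl(1-\tfrac{aQ^{2n}}{bc}\Bigr),\\
\frac{D_n}{D_{n-1}}&=\Bigl(1-\tfrac{P^nQ^nR^n}{p^n}\Bigr)\Bigl(1-\tfrac{ap^nP^nQ^n}{cR^n}\Bigr)\Bigl(1-\tfrac{ap^nQ^nR^n}{bP^n}\Bigr)\Bigl(1-\tfrac{bcp^nP^nR^n}{Q^n}\Bigr),
\end{align*}
and $(a;p^2)_n/(ap^2;p^2)_n=(1-a)/(1-ap^{2n})$ (and likewise for the other three base-shifted factorials), which rewrites the factorial part of $\alpha_n$ as $N_n$ times a quotient of the four linear factors of $N_n/N_{n-1}$. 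Substituting $N_{n-1}=N_n/(N_n/N_{n-1})$ into $\beta_n-\beta_{n-1}=\tfrac{N_{n-1}}{D_n}\bigl(\tfrac{N_n}{N_{n-1}}-\tfrac{D_n}{D_{n-1}}\bigr)$ and cancelling the factor common to this and to $\alpha_n$, the identity $\alpha_n=\beta_n-\beta_{n-1}$ reduces (using $\tfrac{1-c}{1-1/c}=-c$) to the single monomial identity
\begin{multline*}
(1-aX^2)(1-bY^2)(1-cW^2)\Bigl(1-\tfrac{aZ^2}{bc}\Bigr)\\
-\Bigl(1-\tfrac{YZW}{X}\Bigr)\Bigl(1-\tfrac{aXYZ}{cW}\Bigr)\Bigl(1-\tfrac{aXZW}{bY}\Bigr)\Bigl(1-\tfrac{bcXYW}{Z}\Bigr)\\
=-cW^2\,(1-aXYZW)\Bigl(1-\tfrac{bXY}{ZW}\Bigr)\Bigl(1-\tfrac{YZ}{cXW}\Bigr)\Bigl(1-\tfrac{aXZ}{bcYW}\Bigr),
\end{multline*}
in the variables $X=p^n$, $Y=P^n$, $Z=Q^n$, $W=R^n$. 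Both sides are Laurent polynomials in $a,b,c,X,Y,Z,W$; expanding and comparing coefficients (a finite check, in which the constant terms and the extreme monomials $a^2X^2Y^2Z^2W^2$ are seen at once to match) establishes the identity. I expect this expansion to be the only substantive computation in the argument.

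For \eqref{poly2q}, specialize \eqref{poly2} by setting $p=P=Q=R=q^{m/2}$, which is legitimate since \eqref{poly2} holds for arbitrary bases. Every base and parameter occurring in \eqref{poly2} involves $p,P,Q,R$ only through the even combinations $p^2,P^2,Q^2,R^2$ and $\tfrac{PQR}{p},\tfrac{pPQ}{R},\tfrac{pQR}{P},\tfrac{pPR}{Q}$ (each equal to $q^m$ after the substitution) and through $p^nP^nQ^nR^n$, $\tfrac{p^nP^n}{Q^nR^n}$, $\tfrac{P^nQ^n}{p^nR^n}$, $\tfrac{p^nQ^n}{P^nR^n}$, so the substitution is unambiguous and amounts to putting $(q^{m/2})^2=q^m$. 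Under it the three factors $1-\tfrac{bp^nP^n}{Q^nR^n}$, $1-\tfrac{P^nQ^n}{cp^nR^n}$, $1-\tfrac{ap^nQ^n}{bcP^nR^n}$ become $1-b$, $1-\tfrac{1}{c}$, $1-\tfrac{a}{bc}$ and cancel the matching factors in $(1-a)(1-b)(1-1/c)(1-a/bc)$; the surviving factor $\tfrac{1-ap^nP^nQ^nR^n}{1-a}=\tfrac{1-aq^{2mn}}{1-a}$ is rewritten as $\tfrac{(q^m\sqrt a,-q^m\sqrt a;q^m)_n}{(\sqrt a,-\sqrt a;q^m)_n}$; the products $(a;p^2)_n(b;P^2)_n(c;R^2)_n(\tfrac{a}{bc};Q^2)_n$ and the common denominator collapse respectively to $(a,b,c,\tfrac{a}{bc};q^m)_n$ and $(\tfrac{a}{c}q^m,\tfrac{a}{b}q^m,bcq^m,q^m;q^m)_n$; and $(xR^2)^n$ becomes $(xq^m)^n$. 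This is precisely \eqref{poly2q}. (Alternatively one could re-run the telescoping directly with the single base $q^m$, with the monomial identity above replaced by its $X=Y=Z=W$ specialization, but deriving \eqref{poly2q} from \eqref{poly2} is cleaner.)
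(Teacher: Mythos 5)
Your proof is correct and follows essentially the same route as the paper: both arguments feed the partial sums $\beta_n$ and the terms $\alpha_n$ of the four-parameter telescoping summation into Corollary~\ref{6c1} to get \eqref{poly2}, and then set $p=P=Q=R=q^{m/2}$ to obtain \eqref{poly2q}. The only difference is that the paper simply cites the Subbarao--Verma identity (equation (2.2) of \cite{SV99} with $m=0$, $d=1$) for the fact that $\beta_n=\sum_{r=0}^{n}\alpha_r$, whereas you re-derive that telescoping from scratch by reducing it to the displayed Laurent-polynomial identity in $a,b,c,X,Y,Z,W$ (which is indeed true and is exactly the content of the cited lemma).
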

\begin{proof}
We use the special case $m=0$, $d=1$ of the identity of Subbarao and
Verma labeled (2.2) in \cite{SV99}, namely,
{\allowdisplaybreaks\begin{multline} \sum_{k=0}^{n}  \frac{
\left(1-a p^k P^k Q^k R^k\right)\left(1-b \frac{p^k P^k}{ Q^{k}
R^{k}}\right) \left(1-\frac{ P^k Q^k
  }{c p^{k} R^{k}}\right) \left(1-\frac{a p^k
   Q^k }{b cP^{k}R^{k}}\right)}{(1-a) (1-b)
   \left(1-\frac{1}{c}\right) \left(1-\frac{a}{b c}\right)}
\\
\times
 \frac{ (a;p^2)_k
\left(b ;P^2 \right)_k  } {
\left(\frac{PQR}{p};\frac{PQR}{p}\right)_k \left(\frac{a p P Q}{c
R};\frac{p P Q}{R}\right)_k}
 \frac{ (c;R^2)_k
\left(\frac{a }{b c};Q^2 \right)_k } { \left(\frac{a
pQR}{bP};\frac{pQR}{P}\right)_k \left(\frac{b c p P R}{Q};\frac{ p P
R}{Q}\right)_k}\, R^{2k}\\
= \frac{ (ap^2;p^2)_n \left(b P^2;P^2 \right)_n (cR^2;R^2)_n
\left(\frac{a Q^2}{b c};Q^2 \right)_n  } {
\left(\frac{PQR}{p};\frac{PQR}{p}\right)_n \left(\frac{a p P Q}{c
R};\frac{p P Q}{R}\right)_n\left(\frac{a
pQR}{bP};\frac{pQR}{P}\right)_n \left(\frac{b c p P R}{Q};\frac{ p P
R}{Q}\right)_n},
\end{multline}}
and then in \eqref{6simsum} let $\alpha_i$ be the $i$-th term in the
sum above, and let $\beta_n$ be the quantity on the right side
above.

The identity at \eqref{poly2q} follows upon setting
$P=Q=p=R=q^{m/2}$ and simplifying.
\end{proof}

\section{A Connection with the Prouhet - Tarry - Escott Problem}
We begin with a simple example.

\begin{corollary}\label{6c5}
\begin{multline}\label{6phi52}
_{6} \phi _{5} \left [
\begin{matrix}
q\sqrt{xyz},-q\sqrt{xyz}, y,z, aq,bq\\
\sqrt{xyz},-\sqrt{xyz}, qxy,qxz,a b q
\end{matrix}
; q,x \right ]\\
= \frac{(1-xy)(1-xz)}{ (1-x)(1-xyz)} \;_{4}\phi_{3} \left
[\begin{matrix}
y, z, a, b\\
xy,xz, a b q
\end{matrix}
; q,x q \right ].
\end{multline}
\end{corollary}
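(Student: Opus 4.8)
The plan is to derive \eqref{6phi52} directly from Corollary~\ref{6c1} by choosing a suitable WP-related pair $(\alpha_n,\beta_n)$ satisfying the relation $\beta_n=\sum_{r=0}^n\alpha_r$. The natural candidate is the $q$-analogue of the summand of a terminating $q$-Vandermonde/Gauss-type sum: I want $\alpha_r$ to be (a ratio of Pochhammer symbols in $a,b,q$) so that its partial sums telescope to a closed form. Specifically, following the pattern of Corollary~\ref{6c6}, I would set
\[
\alpha_n=\frac{(aq,bq;q)_n}{(q,abq;q)_n}-\frac{(aq,bq;q)_{n-1}}{(q,abq;q)_{n-1}}
\]
for $n\geq 1$ and $\alpha_0=1$, so that automatically $\beta_n=\sum_{r=0}^n\alpha_r=\dfrac{(aq,bq;q)_n}{(q,abq;q)_n}$ by telescoping. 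The first step is therefore to simplify this difference.

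The computation of $\alpha_n$ is the one nonroutine-looking step, though it is short: factoring out $(aq,bq;q)_{n-1}/(q,abq;q)_{n-1}$ leaves
\[
\alpha_n=\frac{(aq,bq;q)_{n-1}}{(q,abq;q)_{n-1}}\cdot\frac{(1-aq^n)(1-bq^n)-(1-q^n)(1-abq^n)}{(1-q^n)(1-abq^n)},
\]
and expanding the numerator gives $(1-aq^n)(1-bq^n)-(1-q^n)(1-abq^n)=q^n(1-a)(1-b)$, so that
\[
\alpha_n=\frac{(1-a)(1-b)}{(1-q^n)(1-abq^n)}\cdot\frac{(aq,bq;q)_{n-1}}{(q,abq;q)_{n-1}}q^n
=\frac{(a,b;q)_n}{(q,abq;q)_n}q^n,
\]
using $(1-a)(aq;q)_{n-1}=(a;q)_n$ and similarly for $b$, together with $(1-q^n)(q;q)_{n-1}=(q;q)_n$. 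I expect this algebraic identity (the "$(1-a)(1-b)q^n$" collapse) to be the main obstacle, in the sense that it is the only place where something must actually be checked; everything else is substitution.

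Having $\alpha_n=\dfrac{(a,b;q)_n}{(q,abq;q)_n}q^n$ and $\beta_n=\dfrac{(aq,bq;q)_n}{(q,abq;q)_n}$, the final step is to substitute both into \eqref{6simsum}. On the left-hand side the factor $\beta_n=(aq,bq;q)_n/(q,abq;q)_n$ contributes exactly the extra numerator parameters $aq,bq$ and denominator parameter $abq$ (the $(q;q)_n$ sitting in the definition of $\,_r\phi_s$), turning the $\,_4\phi_3$ of Corollary~\ref{6c1}'s left side into the $\,_6\phi_5$ displayed in \eqref{6phi52} with argument $x$. On the right-hand side the factor $\alpha_n=(a,b;q)_n q^n/(q,abq;q)_n$ likewise inserts numerator parameters $a,b$, denominator parameter $abq$, and—crucially—the $q^n$ combines with the existing $x^n$ to give argument $xq$, producing the $\,_4\phi_3$ on the right of \eqref{6phi52}. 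Matching the prefactor $(1-xy)(1-xz)/\big((1-x)(1-xyz)\big)$ is immediate since it is carried over unchanged from \eqref{6simsum}, and this completes the proof. Convergence of both series holds since they terminate is not the case, but for $|x|<1$, $|xq|<1$ the $\,_6\phi_5$ and $\,_4\phi_3$ converge by the standard ratio test, which is implicit in the "assuming both series converge" hypothesis of Corollary~\ref{6c1}.
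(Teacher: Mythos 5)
Your proposal is correct and follows essentially the same route as the paper: the authors likewise define $\alpha_0=1$ and $\alpha_n=\frac{(aq,bq;q)_n}{(abq,q;q)_n}-\frac{(aq,bq;q)_{n-1}}{(abq,q;q)_{n-1}}=\frac{(a,b;q)_n\,q^n}{(abq,q;q)_n}$ so that $\beta_n$ telescopes to $\frac{(aq,bq;q)_n}{(abq,q;q)_n}$, and then substitute into Corollary~\ref{6c1}. Your explicit verification of the collapse to $q^n(1-a)(1-b)$ is exactly the computation the paper leaves implicit.
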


\begin{proof}
This time, in Corollary \ref{6c1}, define $\alpha_0 =1$, and for
$n>0$,
\begin{equation*}
 \alpha_n =
 \frac{(aq,bq;q)_n}{(a b q,q;q)_n}-\frac{(aq,bq;q)_{n-1}}{(a b q,q;q)_{n-1}}
 =\frac{(a,b;q)_n  q^{n}}{(a bq,q;q)_n}.
 \end{equation*}
 The  result follows as above.
\end{proof}

The telescoping approach used in Corollary  \ref{6c5} can be
generalized in one direction. We have the following result.

\begin{proposition}\label{6ppte}
Let $x$, $y$ and $q$ be complex numbers with $|x|$, $|q|<1$ and let
$Z$ be an indeterminate. Suppose $a_1, a_2, \dots, a_m$  and  $b_1,
b_2, \dots, b_{m-1}$ are non-zero complex numbers  satisfying
\begin{equation}\label{6abmeq}
\prod_{i=1}^{m}(1-a_i)
=\prod_{i=1}^{m}(Z-a_i)-(Z-1)\prod_{i=1}^{m-1}(Z-b_i).
\end{equation}
Suppose further that $b_i \not =0$, for $1 \leq i \leq m-1$. Then
\begin{multline}
_{m+4} \phi _{m+3} \left [
\begin{matrix}
q\sqrt{xyz},\, -q \sqrt{xyz},\, y,\, z,\, a_1q,\, \dots,\,a_{m-1}q,\, a_m q\\
\sqrt{xyz},\, -\sqrt{xyz},\, q x y,\, qxz,\, b_1q,\,
\dots,\,b_{m-1}q
\end{matrix}
; q,\,x \right ]\\ = \frac{(1-xy)(1-xz)}{ (1-x)(1-xyz)} \,  _{m+2}
\phi _{m+1} \left [
\begin{matrix}
 y,\, z,\, a_1,\, \dots,\,a_{m-1},\, a_m \\
xy,\, xz ,\,  b_1q,\, \dots,\,b_{m-1}q
\end{matrix}
; q,\,x q^m \right ].
\end{multline}
\end{proposition}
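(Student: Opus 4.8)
The plan is to apply Corollary \ref{6c1} with a telescoping choice of the $\alpha_n$, exactly as in the proof of Corollary \ref{6c5}, but now with a general telescoping ``numerator/denominator'' determined by condition \eqref{6abmeq}. First I would set
\[
c_n = \frac{(a_1 q, a_2 q, \dots, a_m q; q)_n}{(b_1 q, b_2 q, \dots, b_{m-1} q, q; q)_n},
\]
define $\alpha_0 = c_0 = 1$ and, for $n \geq 1$, $\alpha_n = c_n - c_{n-1}$, so that automatically $\beta_n := \sum_{r=0}^n \alpha_r = c_n$. Substituting this $\beta_n$ into the left-hand side of \eqref{6simsum} produces precisely the $_{m+4}\phi_{m+3}$ on the left of the claimed identity, since the factors $(a_iq;q)_n$ sit in the numerator and the factors $(b_iq;q)_n$ together with the extra $(q;q)_n$ sit in the denominator.

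The main obstacle — really the only substantive computation — is to show that the telescoped difference $\alpha_n = c_n - c_{n-1}$ simplifies, under hypothesis \eqref{6abmeq}, to the shifted form that generates the $_{m+2}\phi_{m+1}$ on the right, namely
\[
\alpha_n = \frac{(a_1, a_2, \dots, a_m; q)_n}{(b_1 q, \dots, b_{m-1} q, q; q)_n}\, q^{mn}.
\]
To see this, factor $c_{n-1}$ out of $c_n - c_{n-1}$: the ratio $c_n/c_{n-1}$ equals
\[
\frac{(1 - a_1 q^n)\cdots(1 - a_m q^n)}{(1 - b_1 q^n)\cdots(1 - b_{m-1} q^n)(1 - q^n)},
\]
so $\alpha_n = c_{n-1}\big[(1-a_1q^n)\cdots(1-a_mq^n) - (1-b_1q^n)\cdots(1-b_{m-1}q^n)(1-q^n)\big]\big/\big[(1-b_1q^n)\cdots(1-b_{m-1}q^n)(1-q^n)\big]$. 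Now set $Z = q^{-n}$ in \eqref{6abmeq} and multiply both sides by $q^{mn}$: this turns $\prod(Z - a_i)$ into $\prod(1 - a_i q^n)\cdot q^{(m - m)n}$ — more carefully, $\prod_{i=1}^m(q^{-n} - a_i) = q^{-mn}\prod_{i=1}^m(1 - a_i q^n)$, and $(Z-1)\prod_{i=1}^{m-1}(Z - b_i) = q^{-n}(1 - q^n)(-1) \cdot q^{-(m-1)n}(-1)^{m-1}\cdots$, so after clearing the $q^{-mn}$ the bracket becomes exactly $q^{mn}\prod_{i=1}^m(1 - a_i)$ times a sign bookkeeping factor that I would check reduces to $\prod_{i=1}^m(1-a_iq^n) - (1-q^n)\prod_{i=1}^{m-1}(1-b_iq^n)$. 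Hence the bracket in $\alpha_n$ equals $q^{mn}\prod_{i=1}^m(1-a_i)$ — wait, that is independent of $n$, which is wrong; the correct reading is that \eqref{6abmeq} is a polynomial identity in $Z$, so substituting $Z = q^{-n}$ gives $\prod(1-a_iq^n) - (1-q^n)\prod(1-b_iq^n) = q^{mn}\prod_{i=1}^m(q^{-n}-a_i) \cdot (\text{stuff})$; I would carry out this substitution cleanly to extract the factor $\prod_{i=1}^m(1 - a_i q^n)$ up to the overall $q^{mn}$, and then combine with $c_{n-1}$ to telescope the $(a_iq;q)_{n-1}(1-a_iq^n) = (a_iq;q)_n$ — no: one checks it produces $(a_i;q)_n$, because $(1-a_iq^n)$ is not the factor adjoined to $(a_iq;q)_{n-1}$ but rather the relation $(a_i;q)_n = (1-a_i)(a_iq;q)_{n-1} \cdot (1-a_iq^n)/(1-a_i)$ — I would verify the index arithmetic so that $c_{n-1}\cdot\prod(1-a_iq^n) = \prod(a_i;q)_n \big/ \big[(b_1q,\dots,b_{m-1}q,q;q)_n \cdot \prod(1-a_i)\big]\cdot\prod(1-a_i)$, cancelling the $\prod(1-a_i)$ against the one coming from \eqref{6abmeq}.

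Once $\alpha_n$ is in the displayed shifted form, plugging it into the right-hand side of \eqref{6simsum} yields
\[
\frac{(1-xy)(1-xz)}{(1-x)(1-xyz)}\sum_{n=0}^\infty \frac{(y,z;q)_n x^n}{(xy,xz;q)_n}\cdot\frac{(a_1,\dots,a_m;q)_n\, q^{mn}}{(b_1q,\dots,b_{m-1}q,q;q)_n},
\]
and collecting the $x^n q^{mn} = (xq^m)^n$ and matching numerator/denominator parameters gives exactly the $_{m+2}\phi_{m+1}$ with argument $xq^m$ in the statement. (As a sanity check, the case $m = 2$ with $a_1 = a$, $a_2 = b$, $b_1 = ab$ recovers Corollary \ref{6c5}: condition \eqref{6abmeq} reads $(1-a)(1-b) = (Z-a)(Z-b) - (Z-1)(Z-ab)$, which one verifies directly.) I would close the proof here, noting that the convergence hypotheses $|x|, |q| < 1$ together with the non-vanishing of the $b_i$ and of the denominators guarantee that both series in Corollary \ref{6c1} converge, so the application of \eqref{6simsum} is legitimate.
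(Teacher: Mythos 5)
Your proof follows the paper's argument exactly: define $\beta_n$ as the telescoping quotient $(a_1q,\dots,a_mq;q)_n/(b_1q,\dots,b_{m-1}q,q;q)_n$, use \eqref{6abmeq} with $Z=q^{-n}$ to reduce $\alpha_n=\beta_n-\beta_{n-1}$ to the shifted form, and substitute into Corollary \ref{6c1}. Your moment of doubt midway is unfounded: multiplying the $Z=q^{-n}$ specialization of \eqref{6abmeq} by $q^{mn}$ gives $\prod_{i=1}^m(1-a_iq^n)-(1-q^n)\prod_{i=1}^{m-1}(1-b_iq^n)=q^{mn}\prod_{i=1}^m(1-a_i)$, which is not independent of $n$ (it carries the factor $q^{mn}$), and combining this with $(a_i;q)_n=(1-a_i)\,(a_iq;q)_{n-1}$ yields exactly the displayed $\alpha_n=(a_1,\dots,a_m;q)_n\,q^{mn}/(b_1q,\dots,b_{m-1}q,q;q)_n$.
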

\begin{proof}
Define $\alpha_0=1$, and for $n\geq 1$, set
\[
\alpha_n = \frac{(a_1q, a_2 q, \dots,a_{m-1}q, a_m q;q)_{n}} {(b_1q,
b_2 q, \dots,b_{m-1}q, \,q;q)_{n}}-\frac{(a_1q, a_2 q,
\dots,a_{m-1}q, a_m q;q)_{n-1}}{(b_1q, b_2 q, \dots,b_{m-1}q,\,
q;q)_{n-1}}.
\]
By \eqref{6abmeq} with $Z=q^{-n}$,
\[
\alpha_n = \frac{(a_1, a_2 , \dots,a_{m-1}, a_m ;q)_{n}} {(b_1q, b_2
q, \dots,b_{m-1}q, \,q;q)_{n}}\, q^{m n}
\]
and clearly
\begin{equation}\label{absumeq}
\beta_n = \sum_{r=0}^n\alpha_r = \frac{(a_1q, a_2 q, \dots,a_{m-1}q,
a_m q;q)_{n}} {(b_1q, b_2 q, \dots,b_{m-1}q, \,q;q)_{n}}.
\end{equation}
The result follows from Corollary \ref{6c1}.
\end{proof}
It is not clear how to find explicit sets of complex numbers $a_1,
a_2, \dots, a_m$, $b_1, b_2, \dots, b_{m-1}$ satisfying
\eqref{6abmeq}, but  a related problem in number theory provides
solutions for $m \leq 10$ and $m=12$.

The \emph{Prouhet-Tarry-Escott} problem asks for two distinct
 multisets of integers $A = \{a_1, . . . , a_m\}$
and $B = \{b_1, . . . , b_m\}$ such that
\begin{equation}\label{pteeq}
\sum_{i=1}^{m}a_i^e=\sum_{i=1}^{m}b_i^e, \text{ for } e = 1, 2,
\dots , k,
\end{equation}
for some integer $k < m$. If $k = m-1$, such a solution is called
\emph{ideal}. We write
\begin{equation}\label{pteeq2}
\{a_1, . . . , a_m\}\stackrel{k}{=}\{b_1, . . . , b_m\}
\end{equation}
to denote a solution to the Prouhet-Tarry-Escott problem.

 The connection between the Prouhet-Tarry-Escott
problem and the problem mentioned above is contained in the
following proposition (see \cite{BLP03}, page 2065).
\begin{proposition}
The multisets  $A = \{a_1, . . . , a_m\}$ and $B = \{b_1, . . . ,
b_m\}$ form an ideal solution to the Prouhet-Tarry-Escott problem if
and only if
\[
\prod_{i=1}^{m}(Z-a_i)-\prod_{i=1}^{m}(Z-b_i)=C,
\]
for some constant $C$, where $Z$ is an indeterminate.
\end{proposition}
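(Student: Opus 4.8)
The plan is to reduce both sides to statements about symmetric functions of the $a_i$ and the $b_i$, and then to pass between elementary symmetric functions and power sums. First I would expand each monic product in terms of elementary symmetric polynomials: writing $e_j(A)$ and $e_j(B)$ for the $j$-th elementary symmetric polynomial in $a_1,\dots,a_m$ and in $b_1,\dots,b_m$ respectively, one has
\[
\prod_{i=1}^{m}(Z-a_i)-\prod_{i=1}^{m}(Z-b_i)=\sum_{j=1}^{m}(-1)^{j}\bigl(e_j(A)-e_j(B)\bigr)Z^{m-j},
\]
the leading $Z^{m}$ terms cancelling. Since $Z$ is an indeterminate, the left-hand side is a constant $C$ precisely when the coefficient of $Z^{m-j}$ vanishes for every $j$ with $1\le j\le m-1$; that is, precisely when $e_j(A)=e_j(B)$ for $j=1,\dots,m-1$ (in which case $C=(-1)^{m}\bigl(e_m(A)-e_m(B)\bigr)$).

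It then remains to match the condition ``$e_j(A)=e_j(B)$ for $1\le j\le m-1$'' with the Prouhet-Tarry-Escott condition ``$\sum_i a_i^{e}=\sum_i b_i^{e}$ for $1\le e\le m-1$,'' i.e. with \eqref{pteeq} in the case $k=m-1$. For this I would invoke Newton's identities, which over a field of characteristic zero tie the power sums $p_e=\sum_i x_i^{e}$ to the $e_j$ through a triangular system: for each $k\le m$ one has $p_k=e_1p_{k-1}-e_2p_{k-2}+\cdots+(-1)^{k-1}k\,e_k$, so $p_k$ is a polynomial in $e_1,\dots,e_k$ in which $e_k$ occurs with the nonzero coefficient $\pm k$, and conversely $e_k$ is a polynomial in $p_1,\dots,p_k$. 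Applying this to $A$ and to $B$ and subtracting, an immediate induction on $k$ shows that $e_j(A)=e_j(B)$ for all $j\le m-1$ if and only if $p_e(A)=p_e(B)$ for all $e\le m-1$. Chaining the two equivalences gives the proposition.

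The computation itself is routine; the only points needing care are those that legitimise the Newton's-identities step. Because the $a_i$ and $b_i$ are integers we are in characteristic zero, so the factors $\pm k$ appearing in Newton's identities are invertible and the passage between $(e_1,\dots,e_{m-1})$ and $(p_1,\dots,p_{m-1})$ is genuinely a bijection; this is also exactly where the hypothesis that the solution be \emph{ideal} (that is, $k=m-1$, matching the $m-1$ coefficients forced to vanish in the first paragraph) enters. If one preferred to avoid symmetric-function machinery, one could run the induction directly from the single Newton relation displayed above, and I expect phrasing that induction cleanly to be the most delicate part of writing the argument out in full.
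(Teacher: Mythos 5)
Your argument is correct and complete, but there is nothing in the paper to compare it against line by line: the proposition is quoted from Borwein, Lison\v{e}k and Percival \cite{BLP03} with only a page reference, and no proof is given. What you supply is the standard argument from the literature, and it works: expanding the two monic products shows the difference equals $\sum_{j=1}^{m}(-1)^{j}\bigl(e_j(A)-e_j(B)\bigr)Z^{m-j}$, hence is constant exactly when $e_j(A)=e_j(B)$ for $1\le j\le m-1$, and Newton's identities give a triangular, characteristic-zero-invertible passage between $(e_1,\dots,e_{m-1})$ and $(p_1,\dots,p_{m-1})$, so this is equivalent to the first $m-1$ power sums agreeing, which is precisely the ideal Prouhet--Tarry--Escott condition. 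Two small remarks. First, your closing sentence slightly misplaces where ideality enters: it is not what makes the Newton step invertible (only characteristic zero is needed for that); it enters solely in that the number of forced-to-vanish coefficients, $m-1$, matches the number of power-sum equalities demanded of an ideal solution, as you in fact note earlier. Second, the statement as given is loose at one degenerate point: if $A=B$ as multisets the difference of products is the constant $0$, yet $A$ and $B$ do not form a solution since the definition requires distinct multisets; distinctness corresponds exactly to $C\ne 0$. Your proof inherits this from the statement itself, so it is not a defect of your argument.
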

Note that the fact that $b_m=1$ is not a problem, since if
\[
\{a_1, . . . , a_m\} \stackrel{m-1}{=}\{b_1, . . . , b_m\},
\]
then
\[
 \{M a_1+K, . . . , M a_m+K\}\stackrel{m-1}{=} \{M b_1+K, . . .
,M b_m+K\}, \] for constants $M$ and $K$ (see Lemma 1 in \cite{C37},
for example). Note also that if $b_m=1$, then setting $Z=1$ gives
$C=\prod_{i=1}^{m}(1-a_i)$.

Parametric ideal solutions are known for $m=1,\dots , 8$ and
particular numerical solutions are known for $m=9, 10$ and 12.
Although every ideal solution to the Prouhet-Tarry-Escott problem
gives rise to a transformation between basic hypergeometric series,
we will consider just one example. Note also that it is not
necessary, for our purposes, that the $a_i$'s and $b_i$'s be
integers. As above, we assume $x$, $y$ and $q$ are complex numbers,
with $|x|, |q|<1$.
\begin{corollary}\label{6cpte3}
Let $m$ and $n$  be non-zero complex numbers. Set
\begin{align}\label{ab65eq1}
&a_1=-3 m^2+7 n m-2 n^2+1,&&b_1=-3 m^2+8 n m+n^2+1,  \\
&a_2=-2 m^2+8 n m+2 n^2+1,&&b_2=-2 m^2+3 n m-3 n^2+1,\notag\\
&a_3=-m^2-n^2+1,&&b_3=-m^2+10 nm-n^2+1,\notag\\
&a_4=2 m^2+3 n m+n^2+1,&&b_4=2 m^2+2 n m-2 n^2+1,\notag\\
&a_5=m^2+2 n m-3 n^2+1,&&b_5=m^2+7 n m+2 n^2+1,\notag\\
&a_6=10 m n+1.&&\phantom{as}\notag
\end{align}
Then
\begin{multline}
_{10} \phi _{9} \left [
\begin{matrix}
q\sqrt{xyz},\, -q\sqrt{xyz},\, y,\, z,\, a_1q,\, a_2 q, \,a_{3}q,\, a_4 q,\,a_5 q,\,a_6 q\\
\sqrt{xyz},\, -\sqrt{xyz},\, qxy,\, qxz,\, b_1q,\, b_2
q,\,b_{3}q,\,b_4 q,\, b_5 q
\end{matrix}
; q,\,x \right ]\\ = \frac{(1-xy)(1-xz)}{ (1-x)(1-xyz)} \,  _{8}
\phi _{7} \left [
\begin{matrix}
 y,\, z,\, a_1,\, a_2,\, a_{3},\, a_4,\,a_5,\,a_6 \\
xy,\, xz,\, b_1q,\, b_2 q,\, b_{3}q,\,b_4 q,\,b_5 q
\end{matrix}
; q,\,x q^6 \right ].
\end{multline}
\end{corollary}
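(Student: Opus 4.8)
The plan is to apply Proposition \ref{6ppte} directly with $m=6$, so the task reduces to verifying that the eighteen numbers $a_1,\dots,a_6$ and $b_1,\dots,b_5$ given in \eqref{ab65eq1} satisfy the polynomial identity \eqref{6abmeq} with that value of $m$, namely
\[
\prod_{i=1}^{6}(1-a_i) = \prod_{i=1}^{6}(Z-a_i) - (Z-1)\prod_{i=1}^{5}(Z-b_i).
\]
Once this is in hand, Proposition \ref{6ppte} with $m=6$ gives exactly the claimed ${}_{10}\phi_9$–${}_8\phi_7$ transformation, since $m+4=10$, $m+3=9$, $m+2=8$, $m+1=7$, and $xq^m = xq^6$. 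So the entire content of the corollary is the polynomial verification.

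The natural route to that verification is the connection with the Prouhet-Tarry-Escott problem recalled just before the statement. First I would note that $a_6 = 10mn+1$, and more importantly that each $a_i$ and $b_i$ has the form (quadratic in $m,n$) $+\,1$; subtracting $1$ from everything, the numbers $A_i := a_i - 1$ and $B_i := b_i - 1$ (together with $B_6 := 0$, i.e. $b_6 = 1$) should form an ideal solution $\{A_1,\dots,A_6\}\stackrel{5}{=}\{B_1,\dots,B_6\}$ of the PTE problem — this is the classical two-parameter family of ideal symmetric solutions for $m=6$ (Borwein). By the proposition quoted from \cite{BLP03}, an ideal solution is equivalent to $\prod(Z-A_i) - \prod(Z-B_i) = C$ for a constant $C$; shifting $Z \mapsto Z-1$ turns this into $\prod(Z - a_i) - \prod(Z - b_i)(Z-1) = C$ (using $b_6=1$), and evaluating at $Z=1$ identifies $C = \prod_{i=1}^{6}(1-a_i)$. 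That is precisely \eqref{6abmeq} with $m=6$, so \eqref{ab65eq1} supplies an admissible parameter set for Proposition \ref{6ppte}.

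Thus the key steps, in order, are: (i) observe $b_6=1$ and reduce to checking an ideal PTE solution by the shift $a_i \mapsto a_i-1$, $b_i \mapsto b_i-1$; (ii) verify $\sum (a_i-1)^e = \sum(b_i-1)^e$ for $e=1,2,3,4,5$ — equivalently, verify the single polynomial identity in $Z$; (iii) invoke the proposition from \cite{BLP03} and undo the shift to land on \eqref{6abmeq} with $m=6$; (iv) quote Proposition \ref{6ppte}. The main obstacle is step (ii): it is an honest computation with two indeterminates $m,n$, most painlessly done by expanding the degree-$6$ polynomial identity in $Z$ and checking that the coefficients of $Z^5, Z^4, \dots, Z^1$ agree on the two sides (the $Z^6$ coefficients agree automatically, and the constant terms are absorbed into $C$). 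Because the $a_i$ and $b_i$ are explicit low-degree polynomials in $m$ and $n$, this is mechanical — ideally delegated to a computer algebra system — and no conceptual difficulty remains beyond bookkeeping; one should however double-check that the $b_i$ (equivalently $b_i q$) are nonzero under the stated hypotheses, which holds for generic $m,n$ since each $b_i$ is a nonconstant polynomial, so the "denominators bounded away from zero" proviso of Proposition \ref{6ppte} is met.
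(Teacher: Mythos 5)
Your proposal is correct and follows essentially the same route as the paper: reduce the corollary to Proposition \ref{6ppte} with degree parameter $6$ by recognizing that $\{a_i-1\}\stackrel{5}{=}\{b_i-1\}\cup\{0\}$ is an ideal Prouhet--Tarry--Escott solution, which via the cited equivalence yields \eqref{6abmeq}. The only difference is that the paper certifies the PTE property by quoting Chernick's parametric degree-$5$ solution and specializing it (setting $b_6=1$, solving for the translation constant, and rescaling $m,n$ by $1/\sqrt{2}$), whereas you propose to verify the polynomial identity directly; both are valid.
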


\begin{proof}
We have from page 629--30 and Lemma 1 in \cite{C37}, that if
{\allowdisplaybreaks
\begin{align}\label{ab65eq}
&a_1=-5 m^2+4 n m-3 n^2+K,&&b_1=-5 m^2+6 n m+3 n^2+K,\\
&a_2=-3   m^2+6 n m+5 n^2+K,&&b_2=-3 m^2-4 n m-5 n^2+K,\notag \\
&a_3=-m^2-10 n   m-n^2+K,&&b_3=-m^2+10 n m-n^2+K,\notag\\
&a_4=5 m^2-4 n m+3 n^2+K,&&b_4=5  m^2-6 n m-3 n^2+K,\notag\\
&a_5=3 m^2-6 n m-5 n^2+K,&&b_5=3m^2+4 n m+5 n^2+K,\notag\\
&a_6=m^2+10 n m+n^2+K,&&b_6=m^2-10 nm+n^2+K,\notag
\end{align}
} then
\[
\{a_1,a_2,a_3,a_4,a_5,a_6\}\stackrel{5}{=}\{b_1,b_2,b_3,b_4,b_5,b_6\}.
\]
We set $b_6=1$, solve for $K$ and back-substitute in \eqref{ab65eq}.
We then replace $m$ by $m/\sqrt{2}$ and $n$ by $n/\sqrt{2}$. This
leads to the values for the $a_i$'s and $b_i$'s given at
\eqref{ab65eq1} and the result follows, as before, from Proposition
\ref{6ppte}.
\end{proof}
We also note each ideal solution to the Prouhet-Tarry-Escott problem
leads to an infinite summation formula, upon letting $n \to \infty$
in \eqref{absumeq}. We give one example.

\begin{corollary}\label{6cpte5}
Let $m$   be a non-zero complex number. Set {\allowdisplaybreaks
\begin{align*}
\{a_i\}_{i=1}^{12}&=\{1 + 170 m, 1 + 126 m, 1 + 209 m, 1 + 87 m, 1 +
234 m, 1 + 62 m,\\ &\phantom{asdf}1 + 275 m,
  1 + 21 m, 1 + 288 m, 1 + 8 m, 1 + 299 m, 1 - 3 m\},\\
\{b_i\}_{i=1}^{11}&=\{1 + 183 m, 1 + 113 m, 1 + 195 m, 1 + 101 m, 1
+ 242 m, 1 + 54 m,\\ &\phantom{asdf}1 +
  269 m, 1 + 27 m, 1 + 294 m, 1 + 2 m, 1 + 296 m
\}.
\end{align*}
} Then {\allowdisplaybreaks
\begin{multline}\label{14phi13}
\,_{12} \phi _{11} \left [
\begin{matrix}
  a_1,\, a_2,\, a_{3},\, a_4,\,a_5,\,a_6,\,a_7,\,a_8,a_9,a_{10},a_{11},a_{12} \\
b_1q,\, b_2 q,\, b_{3}q,\,b_4 q,\,b_5 q,\,b_6 q,b_7
q,b_9q,b_{10}q,b_{11}q
\end{matrix}
; q,\, q^{12} \right ].\\
=\frac{(a_1q,a_2q,a_{3}q,a_4q,a_5q,a_6q,a_7q,a_8q,a_9q,a_{10}q,a_{11}q,a_{12}q;q)_{\infty}}
{(b_1q,\, b_2 q,\, b_{3}q,\,b_4 q,\,b_5 q,\,b_6 q,b_7
q,b_9q,b_{10}q,b_{11}q,q;q)_{\infty}}
\end{multline}
}
\end{corollary}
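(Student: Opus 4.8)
The plan is to apply Proposition~\ref{6ppte} with $m=12$ and then to let $n\to\infty$ in \eqref{absumeq}, exactly as described in the remark preceding this corollary. For this I must first check that the numbers $a_1,\dots,a_{12}$ together with $b_1,\dots,b_{11}$ and $b_{12}:=1$ satisfy \eqref{6abmeq} with $m=12$, i.e.\ that
\[
\prod_{i=1}^{12}(1-a_i)=\prod_{i=1}^{12}(Z-a_i)-(Z-1)\prod_{i=1}^{11}(Z-b_i).
\]
By the Proposition recalled above (together with the observation that $b_{12}=1$ forces the constant in it to equal $\prod_{i=1}^{12}(1-a_i)$), this is equivalent to saying that $\{a_1,\dots,a_{12}\}\stackrel{11}{=}\{b_1,\dots,b_{11},1\}$ is an \emph{ideal} solution of the Prouhet--Tarry--Escott problem.

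To verify that, subtract $1$ from every parameter and divide by $m$: the twelve $a$-parameters become $\{\pm22,\pm61,\pm86,\pm127,\pm140,\pm151\}$ and the twelve $b$-parameters become $\{\pm35,\pm47,\pm94,\pm121,\pm146,\pm148\}$, a classical symmetric ideal solution of degree $11$. (Since one $b$-parameter is $-148$, the ideal-solution-preserving affine map $c\mapsto mc+(148m+1)$ sends it to $1$; this is how $b_{12}=1$ arises.) The symmetry makes every odd power sum automatically equal on the two sides, and the five identities $\sum a_i^{e}=\sum b_i^{e}$ for $e=2,4,6,8,10$ are a finite check. Hence \eqref{6abmeq} holds with $m=12$, and for all $m$ outside a finite exceptional set none of $b_1,\dots,b_{11}$ vanishes, so the hypotheses of Proposition~\ref{6ppte} are met.

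Next I would rerun the construction in the proof of Proposition~\ref{6ppte}: put $\alpha_0=1$ and, for $n\ge1$, let $\alpha_n$ be the difference of the $n$-th and $(n-1)$-st truncations of $(a_1q,\dots,a_{12}q;q)_n/(b_1q,\dots,b_{11}q,q;q)_n$, so that $\beta_n=\sum_{r=0}^n\alpha_r$ telescopes to $(a_1q,\dots,a_{12}q;q)_n/(b_1q,\dots,b_{11}q,q;q)_n$ --- this is \eqref{absumeq} with $m=12$. Putting $Z=q^{-n}$ in \eqref{6abmeq} rewrites $\alpha_n$ in the closed form $\alpha_n=\dfrac{(a_1,\dots,a_{12};q)_n}{(b_1q,\dots,b_{11}q,q;q)_n}\,q^{12n}$, valid for every $n\ge0$ (the case $n=0$ being $1=1$). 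Summing this over all $n\ge0$ reproduces precisely the series ${}_{12}\phi_{11}$ on the left of \eqref{14phi13}: the argument is $q^{12}$, and since $s+1-r=0$ there, the sign/Gaussian factor in the definition of ${}_{r}\phi_{s}$ is trivial.

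Finally, letting $n\to\infty$ in \eqref{absumeq}: for $|q|<1$ the relevant series converge, and
\[
\lim_{n\to\infty}\frac{(a_1q,\dots,a_{12}q;q)_n}{(b_1q,\dots,b_{11}q,q;q)_n}=\frac{(a_1q,\dots,a_{12}q;q)_\infty}{(b_1q,\dots,b_{11}q,q;q)_\infty},
\]
which is the right-hand side of \eqref{14phi13}. Equating the two evaluations of $\sum_{r\ge0}\alpha_r$ gives the corollary. I expect the only real obstacle to be the first step --- confirming that the listed twelve-tuples form an \emph{ideal} (degree-$11$), not merely a lower-degree, solution of the Prouhet--Tarry--Escott problem, equivalently that the polynomial identity \eqref{6abmeq} holds; once that is in hand, everything else is the bookkeeping of Proposition~\ref{6ppte} followed by a routine limit.
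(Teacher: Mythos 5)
Your proposal is correct and follows essentially the same route as the paper: you recover exactly the symmetric degree-$11$ ideal PTE solution $\{\pm22,\pm61,\pm86,\pm127,\pm140,\pm151\}\stackrel{11}{=}\{\pm35,\pm47,\pm94,\pm121,\pm146,\pm148\}$ of Kuosa, Meyrignac and Shuwen that the paper cites, normalize via the affine map $c\mapsto mc+148m+1$ to force $b_{12}=1$, and then run the telescoping construction of Proposition~\ref{6ppte} followed by the limit $n\to\infty$ in \eqref{absumeq}. You in fact supply more detail than the paper's proof, which records only the PTE solution and leaves the remaining steps to the remark preceding the corollary.
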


\begin{proof}
We use a result of Nuutti Kuosa, Jean-Charles Meyrignac and Chen
Shuwen (see \cite{S99}), namely, that if
\begin{align}\label{Kmeq}
A=\{&K + 22 m, K - 22 m, K + 61 m, K - 61 m, K + 86 m, K - 86 m,\\&
K + 127 m,
  K - 127 m, K + 140 m, K - 140 m, K + 151 m, K - 151 m\},\notag\\
B=\{&K + 35 m, K - 35 m, K + 47 m, K - 47 m, K + 94 m, K - 94 m,\notag\\
&K + 121 m,
  K - 121 m, K + 146 m, K - 146 m, K + 148 m, K - 148 m\},\notag
\end{align}
then
\[
A\stackrel{11}{=}B.
\]
\end{proof}

Remark: Note that while the $K$ and $m$ are irrelevant in
\eqref{Kmeq} in so far as finding integer solutions to the
Prouhet-Tarry-Escott problem (since the solution derived another
solution by scaling by $m$ and translating by $K$ is trivially
equivalent to the original solution), solving $B_{12}=1$ for $K$
leaves $m$ as a non-trivial free parameter in \eqref{14phi13}.

\section{Identities of the Rogers-Ramanujan-Slater Type}\label{RRS
section}

We next prove a number of  identities of the Rogers-Ramanujan-Slater
type. We believe these to be new. We first
prove two general transformations.

\begin{corollary}\label{6c1ab}
For $q$ and $x$  inside the unit disc, and integers $a>0$ and $b$,
\begin{multline}\label{6aln=1b}
\sum_{n=0}^{\infty} \frac{(q\sqrt{xyz},-q\sqrt{xyz}, y,
z;q)_{n}x^{n} q^{(a n^2+b n)/2}}
{(\sqrt{xyz},-\sqrt{xyz},qxy,qxz;q)_{n}(-q^{(a+b)/2};q^a)_n} \\
= \frac{\left(1-xy\right)\left(1-xz\right)}
{(1-x)\left(1-xyz\right)}\left( 1-q^{(a-b)/2} \sum_{n=1}^{\infty}
\frac{(y,z;q)_{n}x^{n}q^{(a n^2+(b-2a) n)/2}}{(xy,xz;q)_n
(-q^{(a+b)/2};q^a)_n} \right ).
\end{multline}
\end{corollary}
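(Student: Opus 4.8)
The plan is to apply Corollary \ref{6c1} with a suitably chosen sequence $\alpha_n$ whose partial sums $\beta_n$ telescope to a clean closed form, exactly as in the proofs of Corollaries \ref{6c6} and \ref{6c5} and Proposition \ref{6ppte}. Here the target right-hand side involves the factor $1/(-q^{(a+b)/2};q^a)_n$ and the Gaussian-type exponent $q^{(an^2+bn)/2}$, so the natural guess is to set, for $n\ge 1$,
\[
\alpha_n=\frac{(-1)^n\,q^{(an^2+bn)/2}}{(-q^{(a+b)/2};q^a)_n}+\frac{(-1)^{n-1}q^{(a(n-1)^2+b(n-1))/2}}{(-q^{(a+b)/2};q^a)_{n-1}},
\qquad \alpha_0=1,
\]
so that the telescoping sum is
\[
\beta_n=\sum_{r=0}^n\alpha_r=\frac{(-1)^n q^{(an^2+bn)/2}}{(-q^{(a+b)/2};q^a)_n}.
\]
Actually, matching signs against the statement more carefully, one wants $\beta_n = (-1)^n q^{(an^2+bn)/2}/(-q^{(a+b)/2};q^a)_n$ only up to sign bookkeeping; the cleanest route is to pick $\beta_n$ first as the quantity appearing (with the correct sign) inside the $x^n$-sum on the left of \eqref{6aln=1b} after one notes that the left side of \eqref{6aln=1b} is literally $\sum_n \frac{(q\sqrt{xyz},\dots;q)_n x^n}{(\sqrt{xyz},\dots;q)_n}\beta_n$ with $\beta_n = q^{(an^2+bn)/2}/(-q^{(a+b)/2};q^a)_n$, and then define $\alpha_n=\beta_n-\beta_{n-1}$.

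**Key steps, in order.** First, set $\beta_n = q^{(an^2+bn)/2}/(-q^{(a+b)/2};q^a)_n$ for $n\ge 0$ (so $\beta_0=1$) and define $\alpha_0=1$, $\alpha_n=\beta_n-\beta_{n-1}$ for $n\ge1$; by construction $\beta_n=\sum_{r=0}^n\alpha_r$, so Corollary \ref{6c1} applies verbatim. Second, substitute these $\beta_n$ into the left-hand side of \eqref{6simsum}: this immediately reproduces the left-hand side of \eqref{6aln=1b}. Third, substitute the $\alpha_n$ into the right-hand side of \eqref{6simsum}. Split off the $n=0$ term (which contributes $\alpha_0=1$) and, for $n\ge1$, write $\alpha_n=\beta_n-\beta_{n-1}$ and simplify the ratio
\[
\frac{\beta_n-\beta_{n-1}}{\beta_n}=1-\frac{\beta_{n-1}}{\beta_n}
=1-\frac{(1+q^{(a(2n-1)+b)/2})\,q^{-(a(2n-1)+b)/2}}{\,\cdots\,},
\]
i.e. compute $\beta_{n-1}/\beta_n = q^{-(a(2n-1)+b)/2}\,(1+q^{a(n-1)+(a+b)/2})$ from the definitions. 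The hope is that the explicit form of $\alpha_n$ obtained this way, after reindexing $n\mapsto n-1$ in the part carrying $\beta_{n-1}$, collapses the right-hand sum into the single series $1 - q^{(a-b)/2}\sum_{n\ge1}\frac{(y,z;q)_n x^n q^{(an^2+(b-2a)n)/2}}{(xy,xz;q)_n(-q^{(a+b)/2};q^a)_n}$ claimed in \eqref{6aln=1b}. Fourth, assemble the two sides, carry along the prefactor $(1-xy)(1-xz)/((1-x)(1-xyz))$ from \eqref{6simsum}, and the identity follows.

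**The main obstacle** is the bookkeeping in step three: one must verify that substituting $\alpha_n=\beta_n-\beta_{n-1}$ and reindexing genuinely produces the single clean series on the right of \eqref{6aln=1b}, rather than a two-term telescoping artifact. Concretely, when $\alpha_n$ is written out it is \emph{not} a single monomial times a Pochhammer ratio (unlike the polybasic case in Proposition \ref{6ppte}, where the defining polynomial identity forces a collapse); instead $\alpha_n=\beta_n-\beta_{n-1}$, and the right side of \eqref{6simsum} becomes $\sum_n c_n(\beta_n-\beta_{n-1})$ with $c_n=(y,z;q)_n x^n/(xy,xz;q)_n$. This is \emph{not} itself telescoping because $c_n$ varies with $n$, so the simplification must come from the specific factorization $\beta_{n-1}/\beta_n = q^{-(a(2n-1)+b)/2}(1+q^{a(n-1)+(a+b)/2})$: the ``$1$'' part regenerates (shifted) terms of the same series, and only the $q^{a(n-1)}$ part survives into the displayed sum. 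I expect the calculation to reduce, after splitting $1+q^{a(n-1)+(a+b)/2}$, to recognizing that $\sum_{n\ge0}c_n\beta_n - \sum_{n\ge1}c_n\beta_{n-1}$ rearranges (using $c_n/c_{n-1}$ explicitly, or more simply by just writing $\alpha_n$ in closed form directly) into $1$ minus the target series with the factor $q^{(a-b)/2}$ and shifted exponent $q^{(an^2+(b-2a)n)/2}$; verifying the exponent arithmetic $a n^2 + b n$ versus $a(n-1)^2+b(n-1)$ and the half-integer shifts in $(-q^{(a+b)/2};q^a)_n$ is where sign and exponent errors are most likely to creep in, and is the step I would check most carefully.
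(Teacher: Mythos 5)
Your overall strategy is the paper's: take $\beta_n = q^{(an^2+bn)/2}/(-q^{(a+b)/2};q^a)_n$, so that the left side of \eqref{6aln=1b} is literally $\sum_n \frac{(q\sqrt{xyz},-q\sqrt{xyz},y,z;q)_n x^n}{(\sqrt{xyz},-\sqrt{xyz},qxy,qxz;q)_n}\beta_n$, set $\alpha_0=1$ and $\alpha_n=\beta_n-\beta_{n-1}$, and invoke Corollary \ref{6c1}. But the point at which you hedge is exactly where the proposal goes wrong: you assert that $\alpha_n$ ``is \emph{not} a single monomial times a Pochhammer ratio'' and then sketch a rearrangement of $\sum_n c_n(\beta_n-\beta_{n-1})$ by splitting and reindexing. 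That assertion is false, and the reindexing scheme is both unnecessary and would not go through as described (shifting $n\mapsto n+1$ in $\sum_{n\ge1}c_n\beta_{n-1}$ produces $\sum_{n\ge0}c_{n+1}\beta_n$, and $c_{n+1}/c_n$ drags in the $y,z$-Pochhammer factors, so nothing recombines into the single claimed series). The ``main obstacle'' you identify is a phantom, and since resolving it is the only substantive step of the proof, the proof as written is incomplete.

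The whole argument is the one-line verification that $\alpha_n$ \emph{does} collapse. Put $\beta_n-\beta_{n-1}$ over the common denominator $(-q^{(a+b)/2};q^a)_n=(-q^{(a+b)/2};q^a)_{n-1}\bigl(1+q^{(a+b)/2+a(n-1)}\bigr)$; the numerator is
\[
q^{(an^2+bn)/2}-q^{(a(n-1)^2+b(n-1))/2}\bigl(1+q^{(a+b)/2+a(n-1)}\bigr),
\]
and since
\[
\tfrac12\bigl(a(n-1)^2+b(n-1)\bigr)+\tfrac{a+b}{2}+a(n-1)=\tfrac12\bigl(an^2+bn\bigr),
\]
the first and third terms cancel, leaving
\[
\alpha_n=-\,q^{(a-b)/2}\,\frac{q^{(an^2+(b-2a)n)/2}}{(-q^{(a+b)/2};q^a)_n},\qquad n\ge1,
\]
which substituted into the right side of \eqref{6simsum} gives \eqref{6aln=1b} at once, with no reindexing. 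In fact your own intermediate formula already contains this: $\beta_{n-1}/\beta_n=q^{-(a(2n-1)+b)/2}\bigl(1+q^{a(n-1)+(a+b)/2}\bigr)=q^{-(a(2n-1)+b)/2}+1$, so $1-\beta_{n-1}/\beta_n$ is a single monomial and $\alpha_n=\beta_n\bigl(1-\beta_{n-1}/\beta_n\bigr)$ is exactly the displayed closed form. Finally, the preliminary guess with the extraneous $(-1)^n$ factors should be deleted outright; it contradicts the $\beta_n$ you correctly settle on.
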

\begin{proof}
In Corollary \ref{6c1} set $\alpha_0=1$ and, for $n>0$,
\[
\alpha_{n}=\frac{q^{(a n^2+b
n)/2}}{(-q^{(a+b)/2};q^a)_n}-\frac{q^{(a (n-1)^2+b
(n-1))/2}}{(-q^{(a+b)/2};q^a)_{n-1}} =-q^{(a-b)/2}\frac{q^{(a
n^2+(b-2a) n)/2}}{(-q^{(a+b)/2};q^a)_n}.
\]
\end{proof}

Remark: We initially derived \eqref{6aln=1b} as stated (with the aim of using it to derive new identities of Rogers-Ramanujan-Slater type), but it was subsequently pointed out to us that the same argument leads to a more general bi-basic identity, if $q^{a/2}$ is replaced with any complex number $p$ with $|p|<1$, and $q^{b/2}$ is replaced with any non-zero complex number $B$. This gives
\begin{multline}\label{6aln=1bpB}
\sum_{n=0}^{\infty} \frac{(q\sqrt{xyz},-q\sqrt{xyz}, y,
z;q)_{n}(Bx)^{n} p^{n^2}}
{(\sqrt{xyz},-\sqrt{xyz},qxy,qxz;q)_{n}(-Bp;p^2)_n} \\
= \frac{\left(1-xy\right)\left(1-xz\right)}
{(1-x)\left(1-xyz\right)}\left( 1-\frac{p}{B} \sum_{n=1}^{\infty}
\frac{(y,z;q)_{n}(Bx)^{n}p^{n^2-2n}}{(xy,xz;q)_n
(-Bp;p^2)_n} \right ).
\end{multline}

\begin{corollary}\label{6c1ac}
For $q$ and $x$  inside the unit disc, and integers $a>0$ and $b$,
\begin{multline}\label{6aln=1c}
\sum_{n=0}^{\infty} \frac{(q\sqrt{xyz},-q\sqrt{xyz}, y,
z;q)_{n}x^{n} q^{a n^2+b n}}
{(\sqrt{xyz},-\sqrt{xyz},qxy,qxz;q)_{n}}  =
\frac{\left(1-xy\right)\left(1-xz\right)}
{(1-x)\left(1-xyz\right)}\\
\times \left( 1-q^{(a-b)} \sum_{n=1}^{\infty}
\frac{(y,z;q)_{n}x^{n}q^{a n^2+(b-2a)n}(1-q^{2 a n
+b-a})}{(xy,xz;q)_n } \right ).
\end{multline}
\end{corollary}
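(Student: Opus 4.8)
The plan is to mimic exactly the telescoping strategy used in Corollaries \ref{6c1ab} and \ref{pb2}: we feed into Corollary \ref{6c1} a sequence $\alpha_n$ chosen so that its partial sums $\beta_n=\sum_{r=0}^n\alpha_r$ collapse to a simple closed form, and then read off \eqref{6aln=1c} by comparing both sides of \eqref{6simsum}. Concretely, I would define $\alpha_0=1$ and, for $n>0$, set
\[
\alpha_n = q^{a n^2+b n}-q^{a(n-1)^2+b(n-1)}.
\]
Then by construction $\beta_n=\sum_{r=0}^n\alpha_r=q^{a n^2+b n}$, so the left-hand side of \eqref{6simsum} becomes precisely the left-hand side of \eqref{6aln=1c}.

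The only genuine computation is to simplify $\alpha_n$ for $n\ge 1$ so that the right-hand side of \eqref{6simsum} takes the stated form. Factoring out $q^{a(n-1)^2+b(n-1)}=q^{an^2+(b-2a)n+(a-b)}$ gives
\[
\alpha_n = q^{a-b}\,q^{a n^2+(b-2a)n}\left(q^{(2an+b-a)}-1\right)
= -\,q^{a-b}\,q^{a n^2+(b-2a)n}\left(1-q^{2an+b-a}\right).
\]
Substituting this into $\sum_{n=0}^{\infty}\frac{(y,z;q)_n x^n}{(xy,xz;q)_n}\alpha_n$ on the right side of \eqref{6simsum}, and pulling the $n=0$ term (which is $\alpha_0=1$) out of the sum, yields exactly
\[
\frac{(1-xy)(1-xz)}{(1-x)(1-xyz)}\left(1-q^{a-b}\sum_{n=1}^{\infty}\frac{(y,z;q)_n x^n q^{a n^2+(b-2a)n}(1-q^{2an+b-a})}{(xy,xz;q)_n}\right),
\]
which is the right-hand side of \eqref{6aln=1c}. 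This completes the argument.

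There is essentially no main obstacle here beyond bookkeeping: the convergence hypotheses needed to apply Corollary \ref{6c1} are supplied by $|q|<1$, $|x|<1$ and the integrality of $a>0$ (which makes the $q^{an^2}$ factor decay fast enough to dominate any polynomial or geometric growth from the other Pochhammer factors), exactly as in the hypothesis of Corollary \ref{6c1ab}. If anything requires a word of care, it is merely checking the exponent arithmetic in the factorization of $\alpha_n$ — one should verify $a(n-1)^2+b(n-1)=an^2+(b-2a)n+(a-b)$ and that the residual factor is $q^{2an+b-a}-1$ — but this is routine. I would present the proof in the same compressed style as the surrounding corollaries: state the choice of $\alpha_n$, record the telescoped $\beta_n$, display the simplified $\alpha_n$, and invoke Corollary \ref{6c1}.
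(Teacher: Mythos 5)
Your proposal is correct and is essentially identical to the paper's own proof: the paper also sets $\alpha_0=1$ and $\alpha_n=q^{an^2+bn}-q^{a(n-1)^2+b(n-1)}=-q^{an^2+(b-2a)n+a-b}(1-q^{2an+b-a})$ for $n>0$ and then invokes Corollary \ref{6c1}. Your exponent bookkeeping matches the paper's exactly.
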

\begin{proof}
In Corollary \ref{6c1} set $\alpha_0=1$ and, for $n>0$,
\[
\alpha_{n}=q^{a n^2+b n}-q^{a (n-1)^2+b (n-1)} =-q^{a n^2+(b-2a)
n+a-b}(1-q^{2 a n+b-a}).
\]
\end{proof}

Remark: As above, we initially derived \eqref{6aln=1b2} as stated with the aim of using it to derive new identities of Rogers-Ramanujan-Slater type (see below), but  a more general bi-basic identity holds, if $q^{a}$ is replaced with any complex number $p$ with $|p|<1$, and $q^{b}$ is replaced with any non-zero complex number $B$. This gives
\begin{multline}\label{6aln=1bpB2}
\sum_{n=0}^{\infty} \frac{(q\sqrt{xyz},-q\sqrt{xyz}, y,
z;q)_{n}(Bx)^{n} p^{n^2}}
{(\sqrt{xyz},-\sqrt{xyz},qxy,qxz;q)_{n}} \\
= \frac{\left(1-xy\right)\left(1-xz\right)}
{(1-x)\left(1-xyz\right)}\left( 1-\frac{p}{B} \sum_{n=1}^{\infty}
\frac{(y,z;q)_{n}(Bx)^{n}p^{n^2-2n}(1-Bp^{2n-1})}{(xy,xz;q)_n
} \right ).
\end{multline}

\begin{corollary}
\begin{equation}\label{RRS3}
\sum_{n=0}^{\infty} \frac{(1+ q^{-2n+3})q^{n^2+6n}}{(q^4;q^4)_{n}}
=\frac{1}{(q^2,q^3;q^5)_{\infty}(-q^2;q^2)_{\infty}}.
\end{equation}
\begin{equation}\label{RRS3N}
\sum_{n=0}^{\infty} \frac{(1+ q^{-2n+1})q^{n^2+4n}}{(q^4;q^4)_{n}}
=\frac{1}{(q,q^4;q^5)_{\infty}(-q^2;q^2)_{\infty}}.
\end{equation}
\end{corollary}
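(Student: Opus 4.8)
The plan is to deduce both identities from Corollary~\ref{6c1ac}, which is the natural engine for Rogers--Ramanujan--Slater identities built out of quadratic exponents, by making a judicious choice of the free parameters $x$, $y$, $z$ and the integers $a$, $b$. First I would pick the quadratic form on the left of \eqref{6aln=1c} to match the summand of the target: for \eqref{RRS3} we want the exponent of $q$ to produce $n^2+6n$ after the WP-factors collapse, and the base $q^4$ in the denominator $(q^4;q^4)_n$ forces a specialization in which the factor $(qxy,qxz;q)_n$ (and its partner in the numerator) degenerates to $(q^4;q^4)_n$. The standard device is to send one of $y,z$ (say $z$) to a value like $1/x$ or to $0$, and choose the other so that $(q\sqrt{xyz},-q\sqrt{xyz};q)_n/(\sqrt{xyz},-\sqrt{xyz};q)_n$ telescopes to a clean $(1-x^{?}q^{2n})$-type factor; with the right bookkeeping this is exactly what yields the $(1+q^{-2n+3})$ factor in \eqref{RRS3} and $(1+q^{-2n+1})$ in \eqref{RRS3N}. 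So step one is: solve the small system of exponent-matching equations to pin down $x$, the surviving letter among $y,z$, and the pair $(a,b)$ — I expect $a=1$ and $b$ a small integer, with $x$ a fixed power of $q$.

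Step two is to evaluate the resulting right-hand side of \eqref{6aln=1c} as an infinite product. After the parameter choice, the inner sum $\sum_{n\ge1}(y,z;q)_n x^n q^{an^2+(b-2a)n}(1-q^{2an+b-a})/(xy,xz;q)_n$ should itself telescope (the $(1-q^{2an+b-a})$ factor is the tell-tale sign of a telescoping series once the $(y,z;q)_n/(xy,xz;q)_n$ ratio has been specialized to something like $1$ or a single $q$-Pochhammer), collapsing the whole bracket $1-q^{a-b}(\dots)$ to a single ${}_1\psi_1$- or theta-type sum. Then I would recognize that sum: the product $1/(q^2,q^3;q^5)_\infty$ is the Rogers--Ramanujan product, so the collapsed series on the right must be matched to the analytic side of the first Rogers--Ramanujan identity (and $1/(q,q^4;q^5)_\infty$ to the second), while the extra factor $1/(-q^2;q^2)_\infty$ will emerge from the $(q^4;q^4)_n$ versus $(q^2;q^2)_n$ discrepancy via $(q^2;q^2)_\infty = (q^4;q^4)_\infty(-q^2;q^2)_\infty$. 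So the skeleton is: specialize, telescope the inner sum, invoke Rogers--Ramanujan, tidy up the product.

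Concretely the write-up will read: ``In \eqref{6aln=1c} set $a=1$, $b=\dots$, $y=\dots$, $z=\dots$, $x=\dots$; the factor $(qxy,qxz;q)_n$ combines with $(\sqrt{xyz},-\sqrt{xyz};q)_n$ to give $(q^4;q^4)_n$ and the numerator product collapses to $1+q^{-2n+3}$; the inner sum on the right telescopes, and applying the (first/second) Rogers--Ramanujan identity together with $(q^2;q^2)_\infty=(q^4;q^4)_\infty(-q^2;q^2)_\infty$ yields the stated product.'' The second identity \eqref{RRS3N} follows from the same template with $b$ shifted so that the right-hand product becomes the second Rogers--Ramanujan product instead of the first — I would present \eqref{RRS3} in full and then say ``\eqref{RRS3N} is obtained in the same way, with $b$ replaced by \dots''.

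The main obstacle is the parameter-matching in step one: the left side of \eqref{6aln=1c} carries the single exponent $q^{an^2+bn}$, but the target has \emph{two} quadratic-looking pieces once one expands $(1+q^{-2n+3})q^{n^2+6n} = q^{n^2+6n}+q^{n^2+4n+3}$, and these must be reconciled with the fact that the WP-factor $(q\sqrt{xyz},-q\sqrt{xyz};q)_n/(\sqrt{xyz},-\sqrt{xyz};q)_n = (1-xyz\,q^{2n})/(1-xyz)$ supplies exactly a two-term split $1 - (\text{stuff})q^{2n}$. Getting the constant $-xyz$ and the shift in the second exponent to line up simultaneously — while also forcing $(qxy,qxz;q)_n/(xy,xz;q)_n$ to degenerate correctly to the $q^4$-base — is a genuinely fiddly constraint-solving task, and it is where an error would most likely creep in. I would do that calculation carefully offline and simply report the winning substitution; everything after it (the telescoping of the inner sum, the Rogers--Ramanujan invocation, the product rearrangement) is routine.
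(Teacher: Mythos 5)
Your overall strategy is inverted relative to what actually works, and the step you defer ``offline'' is precisely the one that has no solution. You propose to realize \eqref{RRS3} as the \emph{left-hand} ($\beta_n$) side of \eqref{6aln=1c}, with the two-term factor $(1+q^{-2n+3})$ supplied by the well-poised quotient $(q\sqrt{xyz},-q\sqrt{xyz};q)_n/(\sqrt{xyz},-\sqrt{xyz};q)_n=(1-xyz\,q^{2n})/(1-xyz)$ and with $(qxy,qxz;q)_n$ degenerating to $(q^4;q^4)_n$. But the denominator $(q^4;q^4)_n$ forces the base of the whole identity to become $q^4$, and then the well-poised quotient can only produce a factor of the form $1-C\,q^{8n}$ --- never $1+q^{-2n+3}$, whose second term sits $2n-3$ powers of $q$ \emph{below} the first. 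Likewise $(qxy,qxz;q^4)_n$ cannot be specialized to $(q^4;q^4)_n$ while simultaneously emptying the numerator factors $(y,z;q^4)_n$. The paper does the opposite: it kills the well-poised quartet entirely (set $z=0$, replace $x$ by $x/y$, let $y\to\infty$, giving \eqref{RRS3eq1}), so that the target series appears as the specialized \emph{inner} ($\alpha_n$) sum on the right, where the factor $(1-q^{2an+b-a})$ --- after re-indexing $n\mapsto n+1$ and replacing $q$ by $-q$ --- becomes exactly $(1+q^{-2n+3})$, while the known evaluation is applied to the left-hand side. The parameters are also not what you predict: one needs $a=-1/4$ (with $b=1$ for \eqref{RRS3} and $b=1/2$ for \eqref{RRS3N}), followed by $q\mapsto q^4$ and $x\to 1$; this steps outside the stated hypothesis ``integer $a>0$'' but is covered by the bibasic form \eqref{6aln=1bpB2}.

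Your second step is also based on the wrong summation input. There is no telescoping of the inner sum --- the inner sum \emph{is} the target --- and the products $1/((q^2,q^3;q^5)_\infty(-q^2;q^2)_\infty)$ and $1/((q,q^4;q^5)_\infty(-q^2;q^2)_\infty)$ do not come from the classical Rogers--Ramanujan identities combined with $(q^2;q^2)_\infty=(q^4;q^4)_\infty(-q^2;q^2)_\infty$. That factorization relates infinite products, but it does not convert $\sum q^{n^2}/(q;q)_n$ into $\sum q^{n^2}/(q^4;q^4)_n$; these are genuinely different series. The required inputs are Rogers' separate pair of mod-$5$ identities with base $q^4$, namely $\sum_{n\ge 0} q^{n^2+2n}/(q^4;q^4)_n = 1/((q^2,q^3;q^5)_\infty(-q^2;q^2)_\infty)$ and $\sum_{n\ge 0} q^{n^2}/(q^4;q^4)_n = 1/((q,q^4;q^5)_\infty(-q^2;q^2)_\infty)$, which you would have to cite (or prove) independently. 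As written, the proposal has a gap at the parameter-matching step (which cannot be closed in the form you describe) and invokes the wrong known identity at the evaluation step.
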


\begin{proof}
In \eqref{6aln=1c}, set $z=0$, replace $x$ by $x/y$ and let $y \to
\infty$ to get
\begin{multline}\label{RRS3eq1}
\sum_{n=0}^{\infty} \frac{(-x)^n q^{an^2+bn+n(n-1)/2}}{(xq;q)_n} =
(1-x) \\ \times \left( 1-q^{(a-b)} \sum_{n=1}^{\infty}
\frac{(-x)^{n}q^{a n^2+(b-2a)n+n(n-1)/2}(1-q^{2 a n
+b-a})}{(x;q)_n } \right ).
\end{multline}
Next, let $a=-1/4$, $b=1$, replace $q$ by $q^4$  and let $x\to 1$
to get
\[
\sum_{n=0}^{\infty} \frac{(-1)^n q^{n^2+2n}}{(q^4;q^4)_n} =
-q^{-5} \sum_{n=1}^{\infty} \frac{(-1)^{n}q^{ n^2+4n}(1-q^{-2 n
+5})}{(q^4;q^4)_{n-1} }.
\]
Replace $q$ by $-q$, re-index the right side by replacing $n$ by
$n+1$ and \eqref{RRS3} follows from the following identity of
Rogers (\cite{R94}, page 331):
\[
\sum_{n=0}^{\infty} \frac{
q^{n^2+2n}}{(q^4;q^4)_n}=\frac{1}{(q^2,q^3;q^5)_{\infty}(-q^2;q^2)_{\infty}}.
\]
The identity at \eqref{RRS3N} follows similarly, using instead
$a=-1/4$, $b=1/2$ in \eqref{RRS3eq1} and employing another
identity of Rogers (\cite{R94}, page 330):
\[
\sum_{n=0}^{\infty} \frac{
q^{n^2}}{(q^4;q^4)_n}=\frac{1}{(q,q^4;q^5)_{\infty}(-q^2;q^2)_{\infty}}.
\]
\end{proof}

\begin{corollary}
\begin{equation}\label{RRS6}
\sum_{n=0}^{\infty} \frac{(b,q^3/b;q)_n
q^{n(n+1)/2}}{(q^2;q^2)_{n+1}(q;q)_{n}}
=\frac{(q^4/b,bq;q^2)_{\infty}}{(q;q)_{\infty}}.
\end{equation}

\begin{equation}\label{RRS6eq2}
\sum_{n=0}^{\infty} \frac{(1-q^{2n+1})(-q^3;q^2)_n
q^{n^2}}{(q^2;q^2)_{n}}
=\frac{1}{(q^3,q^4,q^5;q^8)_{\infty}}.
\end{equation}

\begin{equation}\label{RRS6eq3}
\sum_{n=0}^{\infty}
\frac{(1-q^{2n-1})(-q^3;q^2)_nq^{n^2-2n}}{(q^2;q^2)_{n}}
=\frac{1}{(q,q^4,q^7;q^8)_{\infty}}.
\end{equation}

\begin{equation}\label{RRS6eq4}
1+\sum_{n=1}^{\infty} \frac{(-q;q)_n q^{(n^2-n)/2}}{(q;q)_{n-1}} =
\frac{(-1;q)_{\infty}(-q^6,-q^{10},q^{16};q^{16})_{\infty}}{(q^4;q^4)_{\infty}}.
\end{equation}

\begin{equation}\label{RRS6eq5}
-1+\sum_{n=1}^{\infty} \frac{(-q;q)_n q^{(n^2-n)/2}}{(q;q)_{n-1}}
=
q\frac{(-1;q)_{\infty}(-q^2,-q^{14},q^{16};q^{16})_{\infty}}{(q^4;q^4)_{\infty}}.
\end{equation}

\end{corollary}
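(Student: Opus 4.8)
The plan is to derive all five identities \eqref{RRS6}--\eqref{RRS6eq5} by specializing Corollary \ref{6c1} (or one of its bi-basic refinements \eqref{6aln=1bpB}, \eqref{6aln=1bpB2}) and then matching the resulting sums against known Rogers--Ramanujan--Slater identities from the classical literature (Rogers, Slater). The overall pattern will mirror the proofs of \eqref{RRS3} and \eqref{RRS3N}: first collapse the two-variable transformation to a one-variable one by setting $z=0$, rescaling $x$, and letting $y\to\infty$ (this produces the Rogers--Fine-type form \eqref{RRS3eq1} already used above), then pick $a$, $b$ and a base-change $q\mapsto q^{c}$ so that the left-hand side becomes one of the target series and the right-hand side becomes the product side of a tabulated identity. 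For \eqref{RRS6} itself I expect a slightly different starting point, since it contains a free parameter $b$ and the product side is a ratio of two infinite products of the $q$-Gauss type: here I would keep both $y$ and $z$ in play, choose $\alpha_n$ so that $\beta_n=\sum_{r\le n}\alpha_r$ telescopes to the quotient $(q^4/b,bq;q^2)_n/(\text{something})_n$, and then sum the $\alpha$-side by the $q$-Gauss sum \eqref{qgauss} (or the $q$-binomial theorem \eqref{qbinom}).

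Concretely, for \eqref{RRS6eq2} and \eqref{RRS6eq3} I would specialize \eqref{RRS6} (or the bi-basic form directly): the pair of identities has the shape of the two ``companion'' members of a mod-$8$ Slater family, with $q^{n^2}$ versus $q^{n^2-2n}$ on the sum side and the $(-q^3;q^2)_n$ factor signalling that one should take the free parameter $b$ (or $B$ in \eqref{6aln=1bpB2}) to a root of $\pm q^{3}$ and collapse $(q^2;q^2)_{n+1}(q;q)_n$ down to $(q^2;q^2)_n$ via the $(1\mp q^{2n\pm1})$ numerator. I would then invoke Slater's list to identify the products $1/(q^3,q^4,q^5;q^8)_\infty$ and $1/(q,q^4,q^7;q^8)_\infty$. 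For \eqref{RRS6eq4} and \eqref{RRS6eq5}, the appearance of $(-q;q)_n/(q;q)_{n-1}$ and the mod-$16$ products (a known false-theta / genuine-theta pair — note $(-q^6,-q^{10},q^{16};q^{16})_\infty$ and $(-q^2,-q^{14},q^{16};q^{16})_\infty$ are the two ``halves'' of a mod-$16$ Jacobi triple product split) tells me to take \eqref{6aln=1bpB} with the free parameters tuned so the $(-Bp;p^2)_n$ denominator becomes $(-q;q)_n$ after a base change like $p=q^{1/2}$, $B=q^{1/2}$ or similar; the constant $\pm1$ prepended to the sum on the left of \eqref{6aln=1b}/\eqref{6aln=1bpB} is exactly the ``$1-\cdots$'' term, and the two sign choices $+1$ versus $-1$ together with the factor $q$ on the right of \eqref{RRS6eq5} should come out of the two parity branches.

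The main obstacle I anticipate is \eqref{RRS6eq4}--\eqref{RRS6eq5}: here the right-hand sides are \emph{not} pure infinite products but products times a \emph{finite} theta-like factor $(-q^{a},-q^{b},q^{16};q^{16})_\infty$, so after the specialization I will not be able to simply quote a Rogers--Ramanujan product identity — I will need to recognize the resulting one-variable series as a \emph{false theta series} (compare the second displayed identity in the introduction, $\tfrac12+\sum (-1)^n q^{n(n+1)/2}/(-1;q)_{n+2}=\sum q^{n(3n+1)/2}(1-q^{2n+1})$, which is the prototype of the phenomenon being exploited) and then apply an Euler/Gauss-type pentagonal-style manipulation, or a known false-theta evaluation, to pull out the mod-$16$ factors. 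Splitting $\sum_{n\ge 0}$ by residue of $n$ mod $2$, so that the $n^2$ in the exponent becomes $4k^2\pm\cdots$ and the $(-q;q)_n$ factor regroups, is the technical move I expect to have to get right; once the left side is in the form $\sum_k (\pm1)^k q^{\text{quadratic}(k)}(1-q^{\text{linear}(k)})$ the identification with $(-1;q)_\infty(-q^{a},-q^{b},q^{16};q^{16})_\infty/(q^4;q^4)_\infty$ should follow from the Jacobi triple product. The remaining identities \eqref{RRS6}, \eqref{RRS6eq2}, \eqref{RRS6eq3} I expect to be routine given the machinery already set up in the preceding corollaries.
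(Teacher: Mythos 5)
Your plan has the right general flavor --- specialize the master transformation and then quote a classical summation --- but it commits to several concrete choices that would fail, and the actual engine the paper uses is one you never identify. The paper derives all five identities from the two-parameter telescoping transformation \eqref{6phi52} of Corollary \ref{6c5} (the $_6\phi_5\to{}_4\phi_3$ identity built from $\alpha_n=(a,b;q)_nq^n/(abq,q;q)_n$), specialized by $x\mapsto x/y$, $y\to\infty$; the resulting one-parameter or two-parameter series are then summed by quoting Andrews' $q$-analogue of Bailey's theorem (for \eqref{RRS6}, after $x=-q$, $a=b/q$), the G\"ollnitz--Gordon--Slater identities (for \eqref{RRS6eq2}--\eqref{RRS6eq3}, after $z=x$, $b=0$, $q\mapsto q^2$, $a=-q$, $x=-q$ or $-1/q$), and two Gessel--Stanton mod-$16$ identities (for \eqref{RRS6eq4}--\eqref{RRS6eq5}, with $a=x=-1$). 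Your proposal instead reaches for \eqref{6aln=1b}, \eqref{6aln=1bpB}, \eqref{6aln=1bpB2}, which carry only a single quadratic-exponent shift and cannot by themselves manufacture the $(b,q^3/b;q)_n$ numerator of \eqref{RRS6}; and your proposed auxiliary summations ($q$-Gauss \eqref{qgauss} or the $q$-binomial theorem \eqref{qbinom}) produce base-$q$ products, not the base-$q^2$ quotient $(q^4/b,bq;q^2)_\infty/(q;q)_\infty$, which is the unmistakable signature of Andrews' $q$-Bailey sum.

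Two further specific problems. First, \eqref{RRS6eq2} and \eqref{RRS6eq3} do \emph{not} follow by specializing \eqref{RRS6}: no choice of $b$ in \eqref{RRS6} converts its mod-$2$ product into the mod-$8$ G\"ollnitz--Gordon products $1/(q^3,q^4,q^5;q^8)_\infty$ and $1/(q,q^4,q^7;q^8)_\infty$; a separate specialization of the transformation (with $z=x$ rather than $z=0$, so that the factor $1+xq^n$ survives to become $1\mp q^{2n\pm1}$) is required. Second, your reading of the right sides of \eqref{RRS6eq4}--\eqref{RRS6eq5} as ``not pure infinite products'' is mistaken: $(-q^6,-q^{10},q^{16};q^{16})_\infty$ \emph{is} an infinite product (a Jacobi-triple-product theta), and these two identities are obtained simply by citing the Gessel--Stanton evaluations of $1+\sum_{n\ge1}(-q;q)_{n-1}q^{(n^2+n)/2}/(q;q)_n$ and $\sum_{n\ge0}(-q;q)_nq^{(n^2+3n)/2}/(q;q)_{n+1}$; the elaborate parity-splitting and triple-product regrouping you anticipate is unnecessary and, as sketched, is not shown to terminate in the stated products. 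As it stands the proposal is a strategy outline with several wrong turns rather than a proof.
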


\begin{proof}
In \eqref{6phi52}, let $z=0$, replace $x$ by $x/y$ and let $y \to
\infty$ to get
\begin{equation*} \sum_{n=0}^{\infty}
\frac{(aq,bq;q)_n (-x)^n q^{n(n-1)/2}}{(qx,abq,q;q)_{n}}=(1-x)
\sum_{n=0}^{\infty} \frac{(a,b;q)_n (-xq)^n
q^{n(n-1)/2}}{(x,abq,q;q)_{n}}.
\end{equation*} Then set $x=-q$, $a=b/q$ and then use Andrews' $q$-Bailey
identity,
 \begin{equation*}
\sum_{n=0}^{\infty} \frac{(b,q/b;q)_n c^n
q^{n(n-1)/2}}{(c;q)_{n}(q^2;q^2)_{n}}
=\frac{(cq/b,bc;q^2)_{\infty}}{(c;q)_{\infty}}
 \end{equation*}
with $c=q^2$, to sum the right side. Finally, replace $b$ by $b/q$
and \eqref{RRS6} follows after a slight manipulation.

For the remaining identities, in \eqref{6phi52} replace $x$ by
$x/y$, let $y \to \infty$ and then set $z=x$ and $b=0$ to get
\begin{equation*}
\sum_{n=0}^{\infty} \frac{(1+x q^{n})(aq;q)_n (-x)^n
q^{n(n-1)/2}}{(q;q)_{n}}=\sum_{n=0}^{\infty} \frac{(a;q)_n (-x)^n
q^{n(n+1)/2}}{(q;q)_{n}}.
\end{equation*}
For \eqref{RRS6eq2} and \eqref{RRS6eq3}, replace $q$ by $q^2$, set
$a=-q$ and, respectively, $x=-q$ and $x=-1/q$, and use the
 G\"{o}llnitz-Gordon-Slater identities (\cite{G65}, \cite{G67}, \cite{S52})
\begin{align}\label{GG1}
\sum_{n=0}^{\infty} \frac{ q^{n^2+2n}(-q;q^2)_{n}}{(q^2;q^2)_{n}}
&= \frac{ 1}{(q^3;q^{8})_{\infty}(q^4;q^{8})_{\infty}
(q^{5};q^{8})_{\infty}},\\
\sum_{n=0}^{\infty} \frac{ q^{n^2}(-q;q^2)_{n}}{(q^2;q^2)_{n}} &=
\frac{ 1}{(q;q^{8})_{\infty}(q^4;q^{8})_{\infty}
(q^{7};q^{8})_{\infty}}, \notag
\end{align}
to sum the respective right sides.

For \eqref{RRS6eq4}, set $a=x=-1$ and use the following identity
of Gessel and Stanton (\cite{GS83}, page 196)
\begin{equation*}
1+\sum_{n=1}^{\infty} \frac{(-q;q)_{n-1} q^{(n^2+n)/2}}{(q;q)_{n}}
=
\frac{(-q;q)_{\infty}(-q^6,-q^{10},q^{16};q^{16})_{\infty}}{(q^4;q^4)_{\infty}}.
\end{equation*}
to sum the resulting right side. The identity at \eqref{RRS6eq5}
follows similarly, again with $a=x=-1$, upon using another
identity of Gessel and Stanton (\cite{GS83}, page 196)
\begin{equation*}
\sum_{n=0}^{\infty} \frac{(-q;q)_{n} q^{(n^2+3n)/2}}{(q;q)_{n+1}}
=
\frac{(-q;q)_{\infty}(-q^2,-q^{14},q^{16};q^{16})_{\infty}}{(q^4;q^4)_{\infty}}.
\end{equation*}
\end{proof}

\begin{corollary}
\begin{equation}\label{S69eq}
\sum_{n=0}^{\infty}\frac{(-q^2;q^2)_{n+1}q^{n^2+2n}}{(q;q)_{2n+3}}=
2(-q^2,-q^{14},q^{16};q^{16})_{\infty}\frac{(-q;q^{2})_{\infty}}{(q^2;q^{2})_{\infty}}-\frac{1}{1-q}.
\end{equation}
\begin{equation}\label{S121eq}
\sum_{n=0}^{\infty}\frac{(-q^2;q^2)_{n}q^{n^2}}{(q;q)_{2n+1}}=
2\frac{(q^2,q^{14},q^{16};q^{16})_{\infty}(q^{12},q^{20};q^{32})_{\infty}}{(q;q)_{\infty}}-1.
\end{equation}
\end{corollary}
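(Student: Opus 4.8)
The plan is to derive both identities from the same master transformation used throughout Section \ref{RRS section}, namely the $z=x$, $b=0$ specialization of \eqref{6phi52},
\begin{equation*}
\sum_{n=0}^{\infty} \frac{(1+x q^{n})(aq;q)_n (-x)^n
q^{n(n-1)/2}}{(q;q)_{n}}=\sum_{n=0}^{\infty} \frac{(a;q)_n (-x)^n
q^{n(n+1)/2}}{(q;q)_{n}},
\end{equation*}
but now worked in the \emph{other} direction: instead of summing the right-hand side by a known Rogers--Ramanujan--Slater identity, I would recognize the \emph{left}-hand side as (essentially) one of Slater's list entries and read off the right-hand side as a $q$-series with quadratic exponent. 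First I would replace $q$ by $q^2$ and choose $a$ and $x$ so that the factor $(aq;q)_n$ becomes $(-q^2;q^2)_n$ and the denominator acquires the shifted factorial $(q;q)_{2n+k}$; the natural choice is $a=-1$ together with an appropriate power of $q$ for $x$, after which the sum $(1+xq^n)(-q^2;q^2)_n(-x)^nq^{n(n-1)}$ telescopes the odd and even parts of a Pochhammer symbol into $(q;q)_{2n+3}$ for \eqref{S69eq} and $(q;q)_{2n+1}$ for \eqref{S121eq}.

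Concretely, for \eqref{S69eq} I expect to take (after the $q\mapsto q^2$ replacement) $x=-q$ and $a=-1$, so the left side becomes, up to an overall constant and an index shift, $\sum_{n\ge 0}(-q^2;q^2)_{n+1}q^{n^2+2n}/(q;q)_{2n+3}$, while the right side is $\sum_{n\ge 0}(-1;q^2)_n(-q)^n q^{n^2+n}/(q^2;q^2)_n$-type series whose even/odd split is evaluated by Slater's identity number 69 (hence the label \verb|S69eq|) together with the false-theta/theta material of \eqref{RRS6eq5}; the extra $-\tfrac{1}{1-q}$ is the $n=0$ term that must be separated before the index shift. For \eqref{S121eq} the analogous choice is $x=-1$ (again after $q\mapsto q^2$, $a=-1$), giving $(q;q)_{2n+1}$ in the denominator and $q^{n^2}$ in the numerator, and the right side is summed by Slater's identity number 121, which produces the product $(q^2,q^{14},q^{16};q^{16})_\infty(q^{12},q^{20};q^{32})_\infty$; here the stray $-1$ is again the isolated $n=0$ contribution.

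The main obstacle will be the bookkeeping in the telescoping step: one must verify that with the chosen $a$ and $x$ the combination $(1+xq^n)(-q^2;q^2)_n$ genuinely collapses $\prod(1-q^{2j})$-type products into $(q;q)_{2n+k}$ with the correct value of $k$ and the correct residual power of $q$, and simultaneously that the resulting quadratic exponent on the other side matches a Slater entry \emph{exactly} (Slater's list is notorious for off-by-$q^{\pm 1}$ and sign discrepancies between sources). A secondary subtlety is the convergence/constant-term issue: in both identities the left side as written starts at $n=0$ but the transformation naturally produces a shifted index, so the isolated terms $-\tfrac{1}{1-q}$ and $-1$ must be tracked carefully rather than absorbed. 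Once the correct Slater identities are pinned down and the $n=0$ terms are handled, the derivation is a routine substitution, exactly parallel to the proofs of \eqref{RRS6eq2}--\eqref{RRS6eq5}.
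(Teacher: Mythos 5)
There is a genuine gap here: the master transformation you start from cannot produce series of the required shape. The $z=x$, $b=0$ specialization of \eqref{6phi52}, namely $\sum_{n\ge0}(1+xq^{n})(aq;q)_n(-x)^nq^{n(n-1)/2}/(q;q)_n=\sum_{n\ge0}(a;q)_n(-x)^nq^{n(n+1)/2}/(q;q)_n$, has (after $q\mapsto q^2$) a single Pochhammer symbol $(q^2;q^2)_n$ in each denominator, whereas the targets \eqref{S69eq} and \eqref{S121eq} have denominators $(q;q)_{2n+3}$ and $(q;q)_{2n+1}$, consisting of interleaved odd and even powers of $q$ and growing by \emph{two} factors per step in $n$. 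The numerator pieces $(1+xq^{2n})$ (one factor) and $(aq^2;q^2)_n$ ($n$ factors, all at even powers) sit in the numerator and cannot ``collapse'' a length-$n$ denominator into a length-$(2n+k)$ one; a count of factors already rules this out, so the verification you flag as ``the main obstacle'' would in fact fail. Moreover this specialization carries no prefactor $\frac{(1-xy)(1-xz)}{(1-x)(1-xyz)}$ and no isolated ``$1-\sum_{n\ge1}$'' term, so the characteristic right-hand shape $2\cdot(\text{product})-\frac{1}{1-q}$, resp.\ $2\cdot(\text{product})-1$, cannot arise from it either; your suggestion that these constants are separated $n=0$ terms does not match their actual origin.

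The paper instead uses \eqref{6aln=1b}, specialized by $z\to 0$ and $q\mapsto q^2$ to \eqref{6aln=1b2}, whose denominator $(q^2xy;q^2)_n(-q^{a+b};q^{2a})_n$ contains \emph{two} Pochhammer symbols. Taking $x=-1$, $a=1$, with $y=-q^2$, $b=2$ (resp.\ $y=-1$, $b=0$) and then replacing $q$ by $-q$ turns these into $(q^4;q^2)_n(q^3;q^2)_n=(q;q)_{2n+2}/\bigl((1-q)(1-q^2)\bigr)$ (resp.\ $(q;q^2)_n(q^2;q^2)_n=(q;q)_{2n}$), which is exactly the interleaving needed; Slater's identities 69 and 121 then evaluate that side, and the prefactor $\frac{1-xy}{1-x}\big|_{x=-1}=\frac{1+y}{2}$ together with the ``$1-q^{a-b}\sum_{n\ge1}$'' structure on the other side is what produces the factor $2$ and the subtracted constants after re-indexing $n\mapsto n+1$. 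Your instinct to apply Slater's entries 69 and 121 to one side and read off the other is correct, but the derivation must start from \eqref{6aln=1b2} (or some identity with two denominator parameters in base $q^2$, plus the $q\mapsto -q$ step), not from the single-denominator specialization of \eqref{6phi52} used for \eqref{RRS6eq2}--\eqref{RRS6eq5}.
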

\begin{proof}
We use \eqref{6aln=1b} to prove these identities. First, let $z\to
0$ and replace $q$ with $q^2$ to get
\begin{multline}\label{6aln=1b2}
\sum_{n=0}^{\infty} \frac{(y;q^2)_{n}x^{n} q^{a n^2+b n}}
{(q^2xy;q^2)_{n}(-q^{a+b};q^{2a})_n} \\
= \frac{\left(1-xy\right)} {(1-x)}\left( 1-q^{a-b}
\sum_{n=1}^{\infty} \frac{(y;q^2)_{n}x^{n}q^{a n^2+(b-2a) n}}{(x
y;q^2)_n (-q^{a+b};q^{2a})_n} \right ).
\end{multline}
 For \eqref{S69eq}, set $a=1$,
$b=2$, $y=-q^2$, and  $x=-1$. Replace $q$ with $-q$, divide both
sides by $(1-q)(1-q^2)$ and use Slater's identity \textbf{69} to sum
the resulting left side:
\begin{equation*}
\sum_{n=0}^{\infty}\frac{(-q^2;q^2)_{n}q^{n^2+2n}}{(q;q)_{2n+2}}=
(-q^2,-q^{14},q^{16};q^{16})_{\infty}\frac{(-q;q^{2})_{\infty}}{(q^2;q^{2})_{\infty}}.
\end{equation*}
The result follows after some slight manipulation.

The proof of \eqref{S121eq} is similar, except we set  $a=1$, $b=0$,
$y=-1$, and $x=-1$, replace $q$ with $-q$, and use Slater's identity
\textbf{121}:
\begin{equation*}
1+\sum_{n=1}^{\infty}\frac{(-q^2;q^2)_{n-1}q^{n^2}}{(q;q)_{2n}}=
\frac{(q^2,q^{14},q^{16};q^{16})_{\infty}(q^{12},q^{20};q^{32})_{\infty}}{(q;q)_{\infty}}.
\end{equation*}
\end{proof}

\section{Representation of False Theta Series as basic
Hypergeometric Series}

In this section we derive some new representations of the false
theta series $\sum_{n=0}^{\infty} (-1)^n q^{n(n+1)/2}$ and
$\sum_{n=0}^{\infty}  q^{n(3n+1)/2}(1-q^{2n+1})$, as basic
hypergeometric series.

On page 13 of the Lost Notebook \cite{R88} (see also \cite[page
229]{AB05}), Ramanujan recorded the following identity (amongst
others in a similar vein):
\begin{equation}\label{R1}
\sum_{n=0}^{\infty}\frac{(q;q^2)_{n}(-1)^n
q^{n^2+n}}{(-q;q)_{2n+1}}=\sum_{n=0}^{\infty} (-1)^n q^{n(n+1)/2}.
\end{equation}
On page 37 of the Lost Notebook, he recorded the identities
\begin{align}\label{R2}
\sum_{n=0}^{\infty}  q^{n(3n+1)/2}(1-q^{2n+1})
&=\sum_{n=0}^{\infty}\frac{q^{2n^2+n}}{(-q;q)_{2n+1}}\\
&=\sum_{n=0}^{\infty}\frac{(-1)^nq^{n(n+1)/2}}{(-q;q)_{n}}.\notag
\end{align}

The identity that follows from equating the left side to the second
right side above also follows as a special case of a more general
identity first stated by Rogers \cite{R17}.

We use these identities in conjunction with \eqref{6aln=1b2} to
prove the following.
\begin{corollary}
\begin{equation}\label{ft1}
1-\sum_{n=0}^{\infty}\frac{(q;q^2)_{n}(-1)^n
q^{n^2-n}}{(-1;q)_{2n+1}}=\sum_{n=0}^{\infty} (-1)^n q^{n(n+1)/2}.
\end{equation}
\begin{equation}\label{ft2}
\frac{2}{1+q}-\sum_{n=0}^{\infty}\frac{q^{2n^2+3n}}{(-q;q)_{2n+1}(1+q^{2n+3})}
=\sum_{n=0}^{\infty} q^{n(3n+1)/2}(1-q^{2n+1}).
\end{equation}
\begin{equation}\label{ft3}
\frac{1}{2}+\sum_{n=0}^{\infty}\frac{(-1)^nq^{n(n+1)/2}}{(-1;q)_{n+2}}=\sum_{n=0}^{\infty}
q^{n(3n+1)/2}(1-q^{2n+1}).
\end{equation}
\end{corollary}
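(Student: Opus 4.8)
The plan is to specialize the bi-basic transformation \eqref{6aln=1b2} so that one of its two sides collapses, up to an elementary prefactor, onto one of the series sides of Ramanujan's identities \eqref{R1}, \eqref{R2}; replacing that side by the associated false theta series and then simplifying the other side produces the claimed representation. As in the proof of \eqref{RRS3} (where $a=-1/4$ is used), I would apply \eqref{6aln=1b2} for whatever rational values of $a,b$, and whatever limiting regime in $x,y$, are needed. The only facts used about shifted factorials are $(-q;q)_{2n+1}=(1+q)(-q^3;q^2)_n(-q^2;q^2)_n$, $(-q;q^2)_n(-q^2;q^2)_n=(-q;q)_{2n}$, $(-q;q^2)_{n+2}(-q^2;q^2)_n=(-q;q)_{2n+1}(1+q^{2n+3})$, and $(-1;q)_{m+1}=2(-q;q)_m$.

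For \eqref{ft1} I would put $a=b=1$, $y=q$, $x=-1$ in \eqref{6aln=1b2}. Then the left-hand series becomes $(1+q)\sum_{n\ge 0}(q;q^2)_n(-1)^nq^{n^2+n}/(-q;q)_{2n+1}$, which by \eqref{R1} equals $(1+q)\sum_{n\ge 0}(-1)^nq^{n(n+1)/2}$, while the right-hand side reduces to $\tfrac{1+q}{2}\bigl(1-\sum_{n\ge 1}(q;q^2)_n(-1)^nq^{n^2-n}/(-q;q)_{2n}\bigr)$. Equating, cancelling $1+q$, rewriting $1/(-q;q)_{2n}=2/(-1;q)_{2n+1}$, and folding the $n=0$ term (value $\tfrac12$) back into the sum gives \eqref{ft1}.

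For \eqref{ft3} I would put $a=b=\tfrac12$, $y=0$, $x=-1$. Since $(0;q^2)_n=1$, the left-hand side is just $\sum_{n\ge 0}(-1)^nq^{n(n+1)/2}/(-q;q)_n$, which by \eqref{R2} is the false theta series $\sum_{n\ge 0}q^{n(3n+1)/2}(1-q^{2n+1})$; the right-hand side is $\tfrac12\bigl(1-\sum_{n\ge 1}(-1)^nq^{n(n-1)/2}/(-q;q)_n\bigr)$, and after re-indexing $n\mapsto n+1$ and using $(-q;q)_{n+1}=\tfrac12(-1;q)_{n+2}$ this becomes $\tfrac12+\sum_{n\ge 0}(-1)^nq^{n(n+1)/2}/(-1;q)_{n+2}$, which is \eqref{ft3}. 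For \eqref{ft2} I would again take $a=b=1$ but now hold $xy=-q^3$ fixed while letting $y\to\infty$ (so $x\to 0$); then $(y;q^2)_nx^n\to q^{n^2+2n}$ termwise, the left-hand side collapses to $(1+q)(1+q^3)\sum_{n\ge 0}q^{2n^2+3n}/\bigl((-q;q)_{2n+1}(1+q^{2n+3})\bigr)$, and the right-hand side becomes $(1+q^3)\bigl(1-(1+q)\sum_{n\ge 1}q^{2n^2+n}/(-q;q)_{2n+1}\bigr)$. Adding back the $n=0$ term and invoking the $\sum q^{2n^2+n}/(-q;q)_{2n+1}$ form of \eqref{R2} turns the right-hand side into $(1+q^3)\bigl(2-(1+q)F\bigr)$ with $F=\sum q^{n(3n+1)/2}(1-q^{2n+1})$; cancelling $(1+q)(1+q^3)$ and rearranging yields \eqref{ft2}.

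The step I expect to be the real obstacle is discovering these specializations. The denominator of \eqref{6aln=1b2} is rigidly a product of a base-$q^2$ and a base-$q^{2a}$ shifted factorial, so demanding that it reproduce exactly $(-q;q)_{2n+1}$, $(-q;q)_{2n}$, $(-q;q)_n$, or $(-q;q)_{2n+1}(1+q^{2n+3})$, while simultaneously forcing the quadratic exponent in the numerator to equal $n^2+n$, $n(n+1)/2$, or $2n^2+3n$, pins the parameters down very tightly — and for \eqref{ft2} it is only after passing to the limit $y\to\infty$ (with the attendant, routine, justification for interchanging limit and summation) that a workable form emerges. Once the substitution is in hand, the rest is shifted-factorial bookkeeping, the one delicate point being to keep careful track of the prefactor $(1-xy)/(1-x)$ and of the $n=0$ terms so as to land on precisely the additive constants $1$, $\tfrac{2}{1+q}$, and $\tfrac12$.
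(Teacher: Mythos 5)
Your proofs of \eqref{ft1} and \eqref{ft3} coincide with the paper's: the same specializations $a=b=1$, $y=q$, $x=-1$ and $a=b=\tfrac12$, $y=0$, $x=-1$ of \eqref{6aln=1b2}, the same division by $1+q$ (resp.\ extraction of $\tfrac12$), and the same appeals to \eqref{R1} and to the second form of \eqref{R2}; your bookkeeping with $(-1;q)_{m+1}=2(-q;q)_m$ and the $n=0$ terms is correct. For \eqref{ft2} you take a mildly different but equally valid route. The paper first replaces $x$ by $x/y$ and lets $y\to\infty$ to obtain the intermediate identity \eqref{6aln=1b3}, then sets $a=1$, $b=2$, $x=-1$ so that the \emph{left} side becomes Ramanujan's series $\sum_n q^{2n^2+n}/(-q;q)_{2n+1}$ and the right side yields the new representation. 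You instead keep $a=b=1$ and send $y\to\infty$ with $xy=-q^3$ fixed, which reverses the roles: the inner sum on the right collapses (after restoring its $n=0$ term $1/(1+q)$) onto Ramanujan's series, while the left side, via $(-q^5;q^2)_n(-q^2;q^2)_n=(-q;q)_{2n+1}(1+q^{2n+3})/\bigl((1+q)(1+q^3)\bigr)$, produces the series in \eqref{ft2}; the final cancellation of $(1+q)(1+q^3)$ lands exactly on the stated identity. Both variants cost and buy essentially the same thing — yours avoids displaying \eqref{6aln=1b3} but requires the (routine, as you note) Tannery-type justification of the termwise limit $(y;q^2)_nx^n\to q^{n^2+2n}$ along $xy=-q^3$.
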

\begin{proof}
For \eqref{ft1}, set $a=b=1$, $y=q$ and $x=-1$ in \eqref{6aln=1b2}.
Then divide both sides of the resulting identity by $1+q$, so that
the left side becomes the left side of \eqref{R1}. The result
follows after re-indexing the resulting sum on the right side,
together with a little manipulation.

For \eqref{ft2}, replace $x$ with $x/y$ in \eqref{6aln=1b2} and let
$y\to \infty$ to get
\begin{multline}\label{6aln=1b3}
\sum_{n=0}^{\infty} \frac{(-x)^{n} q^{(a+1) n^2+(b-1) n}}
{(q^2x;q^2)_{n}(-q^{a+b};q^{2a})_n} \\
= (1-x)\left( 1-q^{a-b} \sum_{n=1}^{\infty} \frac{(-x)^{n} q^{(a+1)
n^2+(b-2a-1) n}}{(x ;q^2)_n (-q^{a+b};q^{2a})_n} \right ).
\end{multline}
Then set $a=1$, $b=2$, $x=-1$, and divide both sides by $1+q$ so
that the left side becomes the first right side of \eqref{R2}. The
result again follows, upon re-indexing the sum on the right side.

To get \eqref{ft1}, set $y=0$ in \eqref{6aln=1b2}, then $a=b=1/2$
and $x=-1$, so the left side becomes the second right side in
\eqref{R2}. The result likewise follows after re-indexing the
resulting sum on the right side.
\end{proof}


{\bf Received: Month xx, 200x}

\end{document}